\numberwithin{equation}{section}
\newtheorem{theorem}[equation]{Theorem}
\newtheorem*{theorem*}{Theorem} \newtheorem{lemma}[equation]{Lemma}
\newtheorem*{conjecture*}{Mamma Conjecture}
\newtheorem*{conjecture1*}{Mamma Conjecture (revisited)}
\newtheorem{proposition}[equation]{Proposition}
\newtheorem{corollary}[equation]{Corollary}
\newtheorem*{corollary*}{Corollary}
\theoremstyle{remark}
\theoremstyle{remark}
\newtheorem{remark}[equation]{Remark}
\newcommand{\cC}{{\mathcal C}}
\newcommand{\cF}{{\mathcal F}}
\newcommand{\cL}{{\mathcal L}}
\newcommand{\cM}{{\mathcal M}}
\newcommand{\cO}{{\mathcal O}}
\newcommand{\cV}{{\mathcal V}}
\newcommand{\cW}{{\mathcal W}}
\newcommand{\cX}{{\mathcal X}}
\newcommand{\cZ}{{\mathcal Z}}
\newcommand{\bbA}{\mathbb{A}}
\newcommand{\bbG}{\mathbb{G}}
\newcommand{\bbP}{\mathbb{P}}
\newcommand{\bbQ}{\mathbb{Q}}
\newcommand{\bbZ}{\mathbb{Z}}
\DeclareMathOperator{\NMot}{NMot}
\newcommand{\dgcat}{\mathrm{dgcat}} 
\newcommand{\perf}{\mathrm{perf}}
\newcommand{\dg}{\mathrm{dg}}
\newcommand{\uHom}{\underline{\mathrm{Hom}}}
\newcommand{\Hom}{\mathrm{Hom}}
\newcommand{\too}{\longrightarrow}
\newcommand{\ie}{\textsl{i.e.}\ }
\let\oldmarginpar\marginpar
\def\marginpar#1{\oldmarginpar{\tiny #1}}
\begin{document}

\title[Schur-finiteness (and Bass-finiteness) conjecture]{Schur-finiteness (and Bass-finiteness) conjecture \\ for quadric fibrations and \\for families of sextic du Val del Pezzo surfaces}
\author{Gon{\c c}alo~Tabuada}
\address{Gon{\c c}alo Tabuada, Department of Mathematics, MIT, Cambridge, MA 02139, USA}
\email{tabuada@math.mit.edu}
\urladdr{http://math.mit.edu/~tabuada}
\thanks{The author was supported by a NSF CAREER Award}

\subjclass[2010]{14A20, 14A22, 14C15, 14D06, 16H05, 19E08}
\date{\today}
\abstract{Let $Q \to B$ be a quadric fibration and $T \to B$ a family of sextic du Val del Pezzo surfaces. Making use of the recent theory of noncommutative mixed motives, we establish a precise relation between the Schur-finiteness conjecture for $Q$, resp. for $T$, and the Schur-finiteness conjecture for $B$. As an application, we prove the Schur-finiteness conjecture for $Q$, resp. for $T$, when $B$ is low-dimensional. Along the way, we obtain a proof of the Schur-finiteness conjecture for smooth complete intersections of two or three quadric hypersurfaces. Finally, we prove similar results for the Bass-finiteness conjecture.}}

\maketitle


\section{Introduction}
\subsection*{Schur-finiteness conjecture}
Let $\cC$ be a $\bbQ$-linear, idempotent complete, symmetric monoidal category. Given a partition $\lambda$ of an integer $n\geq 1$, consider the corresponding $\bbQ$-linear representation $V_\lambda$ of the symmetric group $\mathfrak{S}_n$ and the associated idempotent $e_\lambda \in \bbQ[\mathfrak{S}_n]$. Under these notations, the Schur-functor $S_\lambda\colon \cC \to \cC$ sends an object $a\in \cC$ to the direct summand of $a^{\otimes n}$ determined by $e_\lambda$. Following Deligne \cite[\S1]{Deligne}, $a \in \cC$ is called {\em Schur-finite} if it is~annihilated by some Schur-functor.

Voevodsky introduced in \cite{Voevodsky} a triangulated category of geometric mixed motives $\mathrm{DM}_{\mathrm{gm}}(k)_\bbQ$ (over a perfect base field $k$). By construction, this category is $\bbQ$-linear, idempotent complete, symmetric monoidal, and comes equipped with a $\otimes$-functor $M(-)_\bbQ \colon \mathrm{Sm}(k) \to \mathrm{DM}_{\mathrm{gm}}(k)_\bbQ$ defined on smooth $k$-schemes of finite type. Given $X \in \mathrm{Sm}(k)$, an important conjecture in the theory of motives is the following:

\vspace{0.1cm}

{\bf Conjecture} $\mathrm{S}(X)$: The geometric mixed motive $M(X)_\bbQ$ is Schur-finite.

\vspace{0.1cm}

Thanks to the (independent) work of Guletskii \cite{Guletskii} and Mazza \cite{Mazza}, the conjecture $\mathrm{S}(X)$ holds in the case where $\mathrm{dim}(X)\leq 1$. Thanks to the work of Kimura \cite{Kimura} and Shermenev \cite{Shermenev}, the conjecture $\mathrm{S}(X)$ also holds in the case where $X$ is an abelian variety. Besides these cases (and some other cases scattered in the literature), the Schur-finiteness conjecture remains wide open. 

The main goal of this note is to prove the Schur-finiteness conjecture in the new cases of quadric fibrations and families of sextic du Val del Pezzo surfaces.

\subsection*{Quadric fibrations} Our first main result is the following:
\begin{theorem}\label{thm:main}
Let $q\colon Q \to B$ a flat quadric fibration of relative dimension $d-2$. Assume that $B$ and $Q$ are $k$-smooth, that all the fibers of $q$ have corank $\leq 1$, and that the locus $D \subset B$ of the critical values of the fibration $q$ is $k$-smooth. Under these assumptions, the following holds:
\begin{itemize}
\item[(i)] When $d$ is even, we have $\mathrm{S}(Q) \Leftrightarrow \mathrm{S}(B) + \mathrm{S}(\widetilde{B})$, where $\widetilde{B}$ stands for the discriminant $2$-fold cover of $B$ (ramified over $D$).
\item[(ii)] When $d$ is odd and $\mathrm{char}(k)\neq 2$, we have $\{\mathrm{S}(V_i)\} + \{\mathrm{S}(\widetilde{D}_i)\} \Rightarrow \mathrm{S}(Q)$, where $V_i$ is any affine open of $B$ and $\widetilde{D}_i$ is any Galois $2$-fold cover of $D_i:=D\cap V_i$.
\end{itemize}
\end{theorem}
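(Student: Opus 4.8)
\textbf{Proof strategy for Theorem \ref{thm:main}.}

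The plan is to pass to the category $\NMot(k)_\bbQ$ of noncommutative mixed motives, where a quadric fibration decomposes thanks to Kuznetsov's semiorthogonal decomposition and where Azumaya algebras become invisible after rationalization. The first ingredient is the ``bridge'': the symmetric monoidal functor $M(X)_\bbQ \mapsto U(X)_\bbQ:=U(\perf(X))_\bbQ$ relating $\mathrm{DM}_{\mathrm{gm}}(k)_\bbQ$ to $\NMot(k)_\bbQ$ has the property that $\mathrm{S}(X)$ holds if and only if $U(X)_\bbQ$ is Schur-finite in $\NMot(k)_\bbQ$; and in the ($\bbQ$-linear, idempotent complete, symmetric monoidal, triangulated) category $\NMot(k)_\bbQ$ the class of Schur-finite objects is stable under direct sums, direct summands, tensor products, duals, and two-out-of-three along distinguished triangles. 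For a sheaf of algebras $\cF$ on a smooth $k$-scheme $Z$ I abbreviate $U(Z,\cF)_\bbQ:=U(\perf(Z,\cF))_\bbQ$.

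Next I invoke Kuznetsov's theorem on quadric fibrations: flatness of $q$ yields a semiorthogonal decomposition $\perf(Q)=\langle \perf(B,\cC_0)\ko \perf(B)_1\ko\ldots\ko\perf(B)_{d-2}\rangle$, where $\cC_0$ is the even Clifford algebra of the line-bundle-valued quadratic form defining $q$. Since semiorthogonal decompositions become direct sums in $\NMot(k)_\bbQ$, this gives $U(Q)_\bbQ\simeq U(B,\cC_0)_\bbQ\oplus U(B)_\bbQ^{\oplus(d-2)}$. Combined with the bridge and the stability of Schur-finiteness under direct sums and summands, both items of the theorem reduce to controlling the Schur-finiteness of $U(B,\cC_0)_\bbQ$.

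For (i), $d$ even: the centre of $\cC_0$ is the structure sheaf of the discriminant double cover $\widetilde{B}\to B$, which is $k$-smooth since $D$ is, and the simple-degeneration hypothesis (corank $\leq1$) makes $\cC_0$ an Azumaya algebra $\cB_0$ over $\widetilde{B}$, so $\perf(B,\cC_0)\simeq\perf(\widetilde{B},\cB_0)$. By the Tabuada--Van den Bergh theorem that noncommutative motives with $\bbQ$-coefficients do not see Azumaya algebras, $U(\widetilde{B},\cB_0)_\bbQ\simeq U(\widetilde{B})_\bbQ$; hence $U(Q)_\bbQ\simeq U(\widetilde{B})_\bbQ\oplus U(B)_\bbQ^{\oplus(d-2)}$, which via the bridge gives $\mathrm{S}(Q)\Leftrightarrow\mathrm{S}(B)+\mathrm{S}(\widetilde{B})$. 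For (ii), $d$ odd: now the centre of $\cC_0$ is only $\cO_B$ and $\cC_0$ is Azumaya merely away from $D$, so the argument must be localized. Cover $B$ by affine opens $\{V_i\}$; using $\mathrm{char}(k)\neq2$ to diagonalize the quadratic form and to analyse its simple degeneration along the smooth locus $D_i:=D\cap V_i$, one relates $\perf(V_i,\cC_0|_{V_i})$ — through a distinguished triangle in $\NMot(k)_\bbQ$ attached to $D_i\hookrightarrow V_i$ — to $U(V_i)_\bbQ$ and to $U(\widetilde{D}_i)_\bbQ$, where $\widetilde{D}_i$ is the Galois double cover of $D_i$ produced by the even Clifford algebra of the restricted (now even-rank) form, after discarding an Azumaya factor by Tabuada--Van den Bergh. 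Two-out-of-three then gives $\mathrm{S}(V_i)+\mathrm{S}(\widetilde{D}_i)\Rightarrow$ Schur-finiteness of $U(V_i,\cC_0|_{V_i})_\bbQ$, and a Mayer--Vietoris assembly in $\NMot(k)_\bbQ$ — whose intersection terms $V_i\cap V_j$ are again affine opens of $B$, hence covered by the hypothesis — upgrades this to Schur-finiteness of $U(B,\cC_0)_\bbQ$ and of $U(B)_\bbQ$, whence $\mathrm{S}(Q)$. Only an implication survives here because the local double cover need be neither canonical nor smooth and so cannot, in general, be split off.

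The main obstacle is the odd case: carrying out the local study of the even Clifford algebra in a neighbourhood of $D$ — pinning down the correct Galois double cover $\widetilde{D}_i$ and the precise distinguished triangle relating $\perf(V_i,\cC_0|_{V_i})$ to $\perf(V_i)$ and $\perf(\widetilde{D}_i)$ — and then threading Schur-finiteness through the Mayer--Vietoris gluing. By contrast, the even case is essentially formal once Kuznetsov's decomposition and the Azumaya-invisibility theorem are granted.
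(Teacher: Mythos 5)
Your treatment of item (i) matches the paper's proof essentially verbatim: Kuznetsov's semiorthogonal decomposition gives $U(Q)\simeq U(B;\cC l_0(q))\oplus U(B)^{\oplus(d-2)}$, the even-case identification $\perf(B;\cC l_0(q))\simeq\perf(\widetilde{B};\cF)$ with $\cF$ Azumaya plus the Tabuada--Van den Bergh ``Azumaya-invisibility'' theorem yields $U(Q)_\bbQ\simeq U(\widetilde{B})_\bbQ\oplus U(B)_\bbQ^{\oplus(d-2)}$, and the bridge between $\mathrm{DM}_{\mathrm{gm}}(k)_\bbQ$ and $\NMot(k)_\bbQ$ transfers this to the conjectures $\mathrm{S}(-)$ (your formulation ``$\mathrm{S}(X)$ iff $U(X)_\bbQ$ is Schur-finite'' glosses over the fact that the bridge is a contravariant functor through the orbit category, so the paper works with $\uHom(U(X)_\bbQ,U(k)_\bbQ)$; since that functor is $\bbQ$-linear and triangulated this is cosmetic).

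For item (ii), however, there is a genuine gap exactly at the point you flag as ``the main obstacle'': you assert, without construction or justification, ``a distinguished triangle in $\NMot(k)_\bbQ$ attached to $D_i\hookrightarrow V_i$'' relating $U(V_i;\cC l_0(q)|_{V_i})_\bbQ$ to $U(V_i)_\bbQ$ and $U(\widetilde{D}_i)_\bbQ$, with $\widetilde{D}_i$ produced by ``the even Clifford algebra of the restricted form''. No such triangle is established, and this is precisely the technical heart of the paper. The paper's route is different and concrete: for $d$ odd, Kuznetsov identifies $\perf(B;\cC l_0(q))$ with $\perf(\cX;\cF)$ for an Azumaya algebra $\cF$ on the discriminant \emph{root stack} $\cX=\sqrt[2]{(\det(E^\vee)^{\otimes 2}\otimes(\cL^\vee)^{\otimes d},\mathrm{disc}(q))/B}$; then Theorem~\ref{thm:aux}(ii) shows that $U(\cX;\cF)_{1/2s}$ lies in the thick triangulated subcategory generated by $\{U(V_i)\}$ and $\{U(\widetilde{D}_i)\}$, and this is proved by presenting $\cX$ locally (after trivializing the line bundle on an affine open) as a quotient stack $[\mathrm{Spec}(B')/\mu_2]$ with $B'=A[t]/\langle t^2-\sigma\rangle$ and invoking the Tabuada--Van den Bergh orbifold theorems ([Orbifold, Cor.~1.28(ii) and Thm.~1.22]) -- this is where the Galois $2$-fold covers of $D_i$ actually enter, not via a Clifford algebra of a restricted form -- followed by the Mayer--Vietoris gluing of Lemma~\ref{lem:key} and the auxiliary observation in Proposition~\ref{prop:computation}(ii) that $U(B)_\bbQ$ is itself built from the $U(V_i)_\bbQ$ by Mayer--Vietoris. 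Moreover, what one obtains is only membership in a thick triangulated subcategory (several triangles plus retracts), not a single canonical triangle, so even the shape of your asserted local statement is unsubstantiated. Without supplying an argument of the strength of Theorem~\ref{thm:aux}(ii) (or an equivalent local analysis of $\cC l_0(q)$ near $D$), the proof of item (ii) is incomplete.
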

To the best of the author's knowledge, Theorem \ref{thm:main} is new in the literature. Intuitively speaking, it relates the Schur-finiteness conjecture for the total space $Q$ with the Schur-finiteness conjecture for certain coverings/subschemes of the base $B$. Among other ingredients, its proof makes use of Kontsevich's noncommutative mixed motives of twisted root stacks; consult \S\ref{sec:root}-\ref{sec:proof} below for details.

Making use of Theorem \ref{thm:main}, we are now able to prove the Schur-finiteness conjecture in new cases. Here are two low-dimensional examples:
\begin{corollary}[Quadric fibrations over curves]\label{cor:main}
Let $q\colon Q \to B$ be a quadric fibration as in Theorem \ref{thm:main} with $B$ a curve\footnote{Since $B$ is a curve, the locus $D\subset B$ of the critical values of $q$ is necessarily $k$-smooth.}. In this case, the conjecture $\mathrm{S}(Q)$ holds.
\end{corollary}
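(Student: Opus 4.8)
The plan is to obtain the corollary as an immediate consequence of Theorem~\ref{thm:main}, together with the theorem of Guletskii \cite{Guletskii} and Mazza \cite{Mazza} recalled in the introduction (namely that $\mathrm{S}(X)$ holds for every $k$-smooth $X$ with $\dim(X)\leq 1$). I would distinguish the two parities of the integer $d$.

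Suppose first that $d$ is even. By Theorem~\ref{thm:main}(i) we then have the equivalence $\mathrm{S}(Q)\Leftrightarrow\mathrm{S}(B)+\mathrm{S}(\widetilde{B})$, so it suffices to verify $\mathrm{S}(B)$ and $\mathrm{S}(\widetilde{B})$. Both $B$ and the discriminant $2$-fold cover $\widetilde{B}$ are $k$-smooth curves: $B$ by hypothesis, and $\widetilde{B}$ because it is finite over $B$ (hence $1$-dimensional) and $k$-smooth, the latter by virtue of the $k$-smoothness of the critical locus $D$, which holds automatically here since $D$ is a finite set of closed points of the curve $B$ (as noted in the footnote). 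Hence $\mathrm{S}(B)$ and $\mathrm{S}(\widetilde{B})$ both hold by Guletskii--Mazza, and the equivalence gives $\mathrm{S}(Q)$.

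Suppose now that $d$ is odd (so that $\mathrm{char}(k)\neq 2$, as required by Theorem~\ref{thm:main}(ii)). Here I would apply the implication of Theorem~\ref{thm:main}(ii) to an affine open cover $B=\bigcup_i V_i$ of the curve $B$. Each $V_i$ is an affine curve, so $\mathrm{S}(V_i)$ holds; and each $D_i=D\cap V_i$ is a finite set of closed points, hence $0$-dimensional, so any Galois $2$-fold cover $\widetilde{D}_i$ of $D_i$ is again $k$-smooth of dimension $0$ (since $\mathrm{char}(k)\neq 2$, such a cover is \'etale over $D_i$), and therefore $\mathrm{S}(\widetilde{D}_i)$ holds as well. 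Plugging $\{\mathrm{S}(V_i)\}+\{\mathrm{S}(\widetilde{D}_i)\}$ into Theorem~\ref{thm:main}(ii) then yields $\mathrm{S}(Q)$.

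With Theorem~\ref{thm:main} in hand the corollary is essentially formal, so I do not anticipate any genuine obstacle; the only points needing (minor) attention are the verifications that the auxiliary schemes $\widetilde{B}$ and $\widetilde{D}_i$ are $k$-smooth of dimension $\leq 1$, which is precisely what licenses the appeal to the Guletskii--Mazza theorem. (A separate input would be needed to cover the case $d$ odd with $\mathrm{char}(k)=2$, which lies outside the scope of Theorem~\ref{thm:main}(ii); I would expect the corollary to be read under the standing characteristic hypothesis of that part of the theorem.)
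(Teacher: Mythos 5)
Your argument is correct and is exactly the (implicit) proof intended in the paper: apply Theorem~\ref{thm:main}, noting that $B$, $\widetilde{B}$, the affine opens $V_i$, and the covers $\widetilde{D}_i$ are all $k$-smooth of dimension $\leq 1$, so the Guletskii--Mazza theorem supplies the required conjectures $\mathrm{S}(-)$ in each case. Your parenthetical caveat about $d$ odd in characteristic $2$ matches the paper's reading, since the corollary inherits the hypotheses of Theorem~\ref{thm:main}(ii).
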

\begin{corollary}[Quadric fibrations over surfaces]\label{cor:main2}
Let $q\colon Q \to B$ be a quadric fibration as in Theorem \ref{thm:main} with $B$ a surface and $d$ odd. In this case, $\mathrm{S}(B)\Rightarrow\mathrm{S}(Q)$.
\end{corollary}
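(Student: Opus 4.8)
The plan is to deduce this directly from part (ii) of Theorem \ref{thm:main}, which in the odd-dimensional case asserts that $\{\mathrm{S}(V_i)\} + \{\mathrm{S}(\widetilde{D}_i)\} \Rightarrow \mathrm{S}(Q)$, where the $V_i$ form an affine open cover of the surface $B$, each $D_i = D \cap V_i$, and $\widetilde{D}_i$ is a chosen Galois $2$-fold cover of $D_i$. So it suffices to verify, under the standing hypothesis $\mathrm{S}(B)$, that each conjecture $\mathrm{S}(V_i)$ and each conjecture $\mathrm{S}(\widetilde{D}_i)$ holds.

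For the $V_i$: the Schur-finiteness conjecture is known to be stable under passage to open subschemes — if $j\colon V \hookrightarrow B$ is an open immersion with closed complement $Z$, the Gysin/localization triangle $M(V)_\bbQ \to M(B)_\bbQ \to M^c$-type triangle (more precisely, the localization triangle $M(U)_\bbQ\to M(X)_\bbQ\to M(X/U)_\bbQ$ relating $M(B)$, $M(V)$ and a shifted twist of $M(Z)$) exhibits $M(V)_\bbQ$ inside a triangle built from $M(B)_\bbQ$ and motives of lower-dimensional schemes, so $\mathrm{S}(B)$ together with the known $1$-dimensional case (Guletskii \cite{Guletskii}, Mazza \cite{Mazza}) and the $0$-dimensional case forces $\mathrm{S}(V_i)$. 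For the $\widetilde{D}_i$: since $B$ is a surface and $D \subset B$ is $k$-smooth of codimension $1$, each $D_i$ is a smooth curve (or a disjoint union of curves), hence so is every Galois $2$-fold cover $\widetilde{D}_i$; thus $\mathrm{S}(\widetilde{D}_i)$ holds unconditionally by the $\mathrm{dim}\leq 1$ case. Feeding these two inputs into Theorem \ref{thm:main}(ii) yields $\mathrm{S}(Q)$.

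Concretely, the steps are: (1) invoke Theorem \ref{thm:main}(ii) to reduce $\mathrm{S}(Q)$ to the finite collections $\{\mathrm{S}(V_i)\}$ and $\{\mathrm{S}(\widetilde{D}_i)\}$; (2) observe $\dim D_i \leq 1$ (here $D$ is automatically $k$-smooth once $B$ is a smooth surface, by the corank hypothesis, so the hypotheses of Theorem \ref{thm:main} are met) and conclude $\mathrm{S}(\widetilde{D}_i)$ from \cite{Guletskii,Mazza}; (3) use the localization triangle in $\mathrm{DM}_{\mathrm{gm}}(k)_\bbQ$ to propagate $\mathrm{S}(B)$ to $\mathrm{S}(V_i)$, again using the $\dim \leq 1$ case for the complement. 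The main — and really the only — subtlety is step (3): one must know that Schur-finiteness is closed under the two-out-of-three property along the relevant distinguished triangle. This follows from the fact that Schur-finiteness in a $\bbQ$-linear idempotent-complete tensor-triangulated category is preserved by direct summands, tensor products, and extensions in triangles (a standard consequence of Deligne's formalism \cite{Deligne}, also recorded by Guletskii and Mazza), so no genuinely new argument is needed; it is simply a matter of citing the appropriate stability statement.
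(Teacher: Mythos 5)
Your proposal is correct and follows essentially the same route as the paper: invoke Theorem \ref{thm:main}(ii), note that the covers $\widetilde{D}_i$ have dimension $\leq 1$ so $\mathrm{S}(\widetilde{D}_i)$ is known, and reduce $\{\mathrm{S}(V_i)\}$ to $\mathrm{S}(B)$ using the fact that for a smooth $k$-surface $X$ one has $\mathrm{S}(X)\Leftrightarrow \mathrm{S}(U)$ for every open $U\subset X$ --- a fact the paper simply asserts and you justify via localization triangles, the $\dim\leq 1$ case, and the two-out-of-three property of Schur-finiteness. Only minor remark: your parenthetical claim that $D$ is automatically $k$-smooth when $B$ is a smooth surface is not needed (and is not asserted by the paper in the surface case), since the phrase ``as in Theorem \ref{thm:main}'' already includes the smoothness of $D$ among the hypotheses.
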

\begin{proof}
Given a smooth $k$-surface $X$, we have $\mathrm{S}(X)\Leftrightarrow \mathrm{S}(U)$ for any open $U$ of $X$. Therefore, thanks to Theorem \ref{thm:main}(ii), the proof follows from the fact that when $B$ is a surface, the conjectures $\{\mathrm{S}(V_i)\}$ can be replaced by the~conjecture~$\mathrm{S}(B)$.
\end{proof}
Corollary \ref{cor:main2} can be applied, for example, to the case where $B$ is (an open subscheme of) an abelian surface or a smooth projective surface with $p_g=0$ which satisfies Bloch's conjecture (see Guletskii-Pedrini \cite[\S4 Thm.~7]{GP}). Recall that Bloch's conjecture holds for surfaces not of general type (see Bloch-Kas-Leiberman \cite{BKL}), for surfaces which are rationally dominated by a product of curves (see Kimura \cite{Kimura}), for Godeaux, Catanese and Barlow surfaces (see Voisin \cite{Voisin2, Voisin}),~etc.
\begin{remark}[Related work]
Let $q\colon Q \to B$ be a quadric fibration as in Theorem \ref{thm:main}. In the particular case where $Q$ and $B$ are smooth {\em projective}, Bouali \cite{Bouali} and Vial \cite[\S4]{Vial} ``computed'' the Chow motive $\mathfrak{h}(Q)_\bbQ$ of $Q$ using smooth projective $k$-schemes of dimension $\leq \mathrm{dim}(B)$. Since the category of Chow motives (with $\bbQ$-coefficients) embeds fully-faithfully into $\mathrm{DM}_{\mathrm{gm}}(k)_\bbQ$ (see \cite[\S4]{Voevodsky}), these computations lead to an alternative ``geometric'' proof of Corollaries \ref{cor:main}-\ref{cor:main2}. Note that in Theorem \ref{thm:main} and in Corollaries \ref{cor:main}-\ref{cor:main2} we do {\em not} assume that $Q$ and $B$ are projective; we are (mainly) interested in geometric mixed motives and {\em not} in pure motives. 
\end{remark}
\subsection*{Intersections of quadrics}
Let $Y\subset \bbP^{d-1}$ be a smooth complete intersection of $m$ quadric hypersurfaces. The linear span of these quadrics gives rise to a flat quadric fibration $q\colon Q \to \bbP^{m-1}$ of relative dimension $d-2$, with $Q$ $k$-smooth. Under these notations, our second main result is the following:
\begin{theorem}\label{thm:intersection}
We have $\mathrm{S}(Q) \Rightarrow \mathrm{S}(Y)$. When $2m\leq d$, the converse also holds.
\end{theorem}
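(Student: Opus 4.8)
The plan is to deduce the theorem from two semiorthogonal decompositions due to Kuznetsov together with the passage to noncommutative mixed motives. Throughout, write $U(-)$ for the noncommutative mixed motive of the perfect dg category of a smooth projective $k$-scheme (or of a smooth proper noncommutative scheme, such as a sheaf of algebras on a smooth projective scheme). The essential input I would invoke is the dictionary: for a smooth projective $k$-scheme $X$, the conjecture $\mathrm{S}(X)$ holds if and only if $U(X)$ is Schur-finite. One implication is immediate, since the symmetric monoidal functor $\mathrm{DM}_{\mathrm{gm}}(k)_\bbQ\to(\text{noncommutative mixed motives})$, $M(X)_\bbQ\mapsto U(X)$, takes Schur functors to Schur functors and hence preserves Schur-finiteness; the converse rests on the fact that this functor factors as an identity-on-objects projection $\mathrm{DM}_{\mathrm{gm}}(k)_\bbQ\to\mathrm{DM}_{\mathrm{gm}}(k)_\bbQ/_{-\otimes\bbQ(1)[2]}$ onto the orbit category — so that an object vanishes there precisely when it vanishes in $\mathrm{DM}_{\mathrm{gm}}(k)_\bbQ$ — followed by a fully faithful $\otimes$-embedding of the (monoidal) orbit category into noncommutative mixed motives; hence a Schur functor annihilating $U(X)$ must already annihilate $M(X)_\bbQ$. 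I would also record two elementary facts: Schur-finiteness is stable under finite direct sums and under direct summands (if $S_\mu(a\oplus c)=0$, then $S_\mu(a)=0$ by the Littlewood-Richardson rule), and the $\otimes$-unit $U(k)$, being $\otimes$-invertible, is Schur-finite.

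Next I would recall Kuznetsov's two decompositions, writing $\cB_0$ for the even Clifford algebra of the quadric fibration $q$ (a sheaf of $\cO_{\bbP^{m-1}}$-algebras). First, the flat quadric fibration $q\colon Q\to\bbP^{m-1}$ of relative dimension $d-2$ induces a semiorthogonal decomposition of $\mathrm{D}^b(Q)$ with components $\mathrm{D}^b(\bbP^{m-1};\cB_0)$ and $d-2$ copies of $\mathrm{D}^b(\bbP^{m-1})$. Second, the smooth complete intersection $Y\subset\bbP^{d-1}$ of the $m$ quadrics induces a semiorthogonal decomposition whose components are $\mathrm{D}^b(\bbP^{m-1};\cB_0)$ together with $|d-2m|$ exceptional objects; these exceptional objects lie ``on the $Y$ side'' when $2m\leq d$ (so that $\mathrm{D}^b(Y)\simeq\mathrm{D}^b(\bbP^{m-1};\cB_0)$ when $2m=d$), and ``on the Clifford side'' when $2m>d$. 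Since $Q$ and $Y$ are smooth projective, $(\bbP^{m-1};\cB_0)$ appears as an admissible — hence itself smooth proper — semiorthogonal piece of $\mathrm{D}^b(Q)$, so $U(\bbP^{m-1};\cB_0)$ is a well-defined noncommutative mixed motive. Applying $U(-)$ — which converts semiorthogonal decompositions into direct sums, sends each exceptional object to $U(k)$, and (via Beilinson's exceptional collection) sends $\bbP^{m-1}$ to $U(k)^{\oplus m}$ — I obtain
\[ U(Q)\;\cong\;U(\bbP^{m-1};\cB_0)\oplus U(k)^{\oplus m(d-2)}, \]
together with an isomorphism $U(Y)\oplus U(k)^{\oplus a}\cong U(\bbP^{m-1};\cB_0)\oplus U(k)^{\oplus b}$ for suitable integers $a,b\geq 0$ with $a=0$ whenever $2m\leq d$.

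Finally I would assemble the two implications. By the first isomorphism and the stability of Schur-finiteness under direct sums and summands, $U(Q)$ is Schur-finite if and only if $U(\bbP^{m-1};\cB_0)$ is; by the second isomorphism, $U(\bbP^{m-1};\cB_0)$ is always a direct summand of $U(Y)\oplus U(k)^{\oplus a}$, and when $2m\leq d$ (so $a=0$) the objects $U(Y)$ and $U(\bbP^{m-1};\cB_0)$ are Schur-finite simultaneously. Hence, via the dictionary of the first paragraph: if $\mathrm{S}(Q)$ holds then $U(Q)$ is Schur-finite, so $U(\bbP^{m-1};\cB_0)$ is, so $U(Y)$ is (add back the copies of $U(k)$), so $\mathrm{S}(Y)$ holds; and when $2m\leq d$ this chain runs in reverse, giving $\mathrm{S}(Y)\Rightarrow\mathrm{S}(Q)$. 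The step I expect to be the main obstacle is the dictionary of the first paragraph — concretely, checking that the orbit category $\mathrm{DM}_{\mathrm{gm}}(k)_\bbQ/_{-\otimes\bbQ(1)[2]}$ carries a symmetric monoidal structure compatible with that of $\mathrm{DM}_{\mathrm{gm}}(k)_\bbQ$ and $\otimes$-embeds fully faithfully into noncommutative mixed motives; once that is in hand, the remainder is bookkeeping with Kuznetsov's semiorthogonal decompositions and the additivity of $U(-)$.
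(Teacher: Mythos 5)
Your proposal is correct and follows essentially the same route as the paper: Kuznetsov's two semiorthogonal decompositions (the quadric-fibration one and Theorem 5.5 for $Y$), additivity of $U(-)$, and the bridge result you flag as the ``main obstacle'' --- which is exactly the cited input \cite[Thm.~2.8]{Bridge} together with Mazza's lemma, yielding the equivalence between $\mathrm{S}(X)$ and Schur-finiteness of the corresponding noncommutative motive. The only cosmetic differences are that the paper works with the dual $\uHom(U(X)_\bbQ,U(k)_\bbQ)$ rather than $U(X)_\bbQ$ itself (harmless here since $Q$ and $Y$ are smooth projective), and in the case $2m>d$ it only invokes that $\perf(Y)$ is an admissible subcategory of $\perf(\bbP^{m-1};\cC l_0(q))$ --- so $U(Y)$ is a direct summand --- rather than your more precise description of the complement by exceptional objects, which is not needed for the stated implication.
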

By combining Theorem \ref{thm:intersection} with the above Corollaries \ref{cor:main}-\ref{cor:main2}, we hence obtain a proof of the Schur-finiteness conjecture in the following cases:
\begin{corollary}[Intersections of two or three quadrics]\label{cor:intersection}
Assume that $q\colon Q \to \bbP^{m-1}$ is as in Theorem \ref{thm:main}. In this case, the conjecture $\mathrm{S}(Y)$ holds when $Y$ is a smooth complete intersection of two, or of three odd-dimensional, quadric hypersurfaces.\end{corollary}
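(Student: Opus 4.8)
The plan is to reduce the statement, via Theorem \ref{thm:intersection}, to the Schur-finiteness conjecture for the total space of the associated quadric fibration, and then to settle the two cases $m=2$ and $m=3$ using Corollaries \ref{cor:main} and \ref{cor:main2}, respectively. Concretely, let $Y\subset \bbP^{d-1}$ be a smooth complete intersection of $m$ quadric hypersurfaces and let $q\colon Q\to\bbP^{m-1}$ be the associated flat quadric fibration of relative dimension $d-2$, which we assume to be as in Theorem \ref{thm:main}. Theorem \ref{thm:intersection} furnishes the unconditional implication $\mathrm{S}(Q)\Rightarrow \mathrm{S}(Y)$ (no inequality between $m$ and $d$ is needed for this direction), so it suffices to prove the conjecture $\mathrm{S}(Q)$.

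When $Y$ is a complete intersection of two quadrics we have $m=2$, so the base $\bbP^{m-1}=\bbP^1$ is a curve; the locus $D\subset\bbP^1$ of critical values of $q$ is then automatically $k$-smooth, and Corollary \ref{cor:main} yields $\mathrm{S}(Q)$, hence $\mathrm{S}(Y)$.

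When $Y$ is a complete intersection of three odd-dimensional quadrics we have $m=3$ and the relative dimension $d-2$ is odd, i.e. $d$ is odd; the base $\bbP^{m-1}=\bbP^2$ is a surface, so by Corollary \ref{cor:main2} it remains to verify $\mathrm{S}(\bbP^2)$. For this I would invoke the standard decomposition $M(\bbP^2)_\bbQ\simeq \bbQ\oplus\bbQ(1)[2]\oplus\bbQ(2)[4]$ into $\otimes$-invertible Tate motives of $\mathrm{DM}_{\mathrm{gm}}(k)_\bbQ$, together with the elementary facts that $\otimes$-invertible objects are Schur-finite and that finite direct sums of Schur-finite objects remain Schur-finite (see \cite[\S1]{Deligne}); this gives $\mathrm{S}(\bbP^2)$, hence $\mathrm{S}(Q)$, hence $\mathrm{S}(Y)$.

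I do not anticipate any genuine obstacle: the substance is entirely carried by Theorems \ref{thm:main}--\ref{thm:intersection} and their corollaries. The only points needing (routine) attention are the verification of $\mathrm{S}(\bbP^2)$ above and the bookkeeping check that the quadric fibration attached to $Y$ satisfies the smoothness requirements on $Q$, $B$, on the fibers (corank $\leq 1$), and on the critical locus $D$ imposed in Theorem \ref{thm:main} — which is exactly what the hypothesis ``$q\colon Q\to\bbP^{m-1}$ is as in Theorem \ref{thm:main}'' stipulates.
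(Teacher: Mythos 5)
Your proposal is correct and follows essentially the same route as the paper: the case of two quadrics is Theorem \ref{thm:intersection} combined with Corollary \ref{cor:main} (base $\bbP^1$), and the case of three odd-dimensional quadrics is Theorem \ref{thm:intersection} combined with Corollary \ref{cor:main2} (base $\bbP^2$, $d$ odd), the only difference being that you spell out the routine verification of $\mathrm{S}(\bbP^2)$ via the Tate-motive decomposition, which the paper leaves implicit.
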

\subsection*{Families of sextic du Val del Pezzo surfaces}
Recall that a {\em sextic du Val del Pezzo surface $X$} is a projective $k$-scheme with at worst du Val singularities and ample anticanonical class such that $K_X^2=6$. Consider a {\em family of sextic du Val del Pezzo surfaces $f\colon T \to B$}, \ie a flat morphism $f$ such that for every geometric point $x \in B$ the associated fiber $T_x$ is a sextic du Val del Pezzo surface. Following Kuznetsov \cite[\S5]{Pezzo}, given $d \in \{2,3\}$, let us write $\cM_d$ for the relative moduli stack of semistable sheaves on fibers of $T$ over $B$ with Hilbert polynomial $h_d(t):=(3t+d)(t+1)$, and $Z_d$ for the coarse moduli space of $\cM_d$. By construction, we have finite flat morphisms $Z_2 \to B$ and $Z_3 \to B$ of degrees $3$ and $2$, respectively. Under these notations, our third main result is the following:
\begin{theorem}\label{thm:Pezzo}
Let $f\colon T \to B$ be a family of sextic du Val del Pezzo surfaces. Assume that $\mathrm{char}(k)\not\in \{2,3\}$ and that $T$ is $k$-smooth. Under these assumptions, we have the equivalence of conjectures $\mathrm{S}(T) \Leftrightarrow \mathrm{S}(B) + \mathrm{S}(Z_2) + \mathrm{S}(Z_3)$.
\end{theorem}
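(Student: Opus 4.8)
The plan is to feed Kuznetsov's semiorthogonal decomposition into the theory of noncommutative mixed motives and then transport the resulting splitting back to Voevodsky's world. First I would recall from \cite{Pezzo} that, under the hypotheses $\mathrm{char}(k)\notin\{2,3\}$ and $T$ $k$-smooth, one has a $B$-linear semiorthogonal decomposition
\[
\perf(T)\;=\;\big\langle\,\perf(B)\,,\ \perf(Z_2;\cF_2)\,,\ \perf(Z_3;\cF_3)\,\big\rangle\,,
\]
where $\cF_2$ and $\cF_3$ are sheaves of Azumaya algebras over $Z_2$ and $Z_3$ (built from the tautological families on the stacks $\cM_2$ and $\cM_3$), and that along the way one checks that $Z_2$ and $Z_3$ are themselves $k$-smooth, so that the conjectures $\mathrm{S}(Z_2)$ and $\mathrm{S}(Z_3)$ are meaningful. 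Applying the universal invariant $U(-)$ underlying the theory of noncommutative mixed motives — which sends semiorthogonal decompositions to direct sums — the decomposition above becomes, in $\NMot(k)_\bbQ$,
\[
U(T)_\bbQ\;\simeq\;U(B)_\bbQ\ \oplus\ U(Z_2;\cF_2)_\bbQ\ \oplus\ U(Z_3;\cF_3)_\bbQ\,.
\]

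Next I would eliminate the Azumaya twists. Since $\cF_2$ and $\cF_3$ are sheaves of Azumaya algebras over smooth $k$-schemes, the computation of the noncommutative motive of an Azumaya algebra (Tabuada--Van den Bergh) — or, equivalently in the present setting, the analysis of Kontsevich's noncommutative mixed motives of twisted root stacks already used in the proof of Theorem \ref{thm:main} — shows that, after inverting the ranks, hence a fortiori with $\bbQ$-coefficients, one has $U(Z_d;\cF_d)_\bbQ\simeq U(Z_d)_\bbQ$ for $d\in\{2,3\}$. Therefore $U(T)_\bbQ\simeq U(B)_\bbQ\oplus U(Z_2)_\bbQ\oplus U(Z_3)_\bbQ$. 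I would then invoke the bridge between Voevodsky's and Kontsevich's theories recalled in \S\ref{sec:root}: for a smooth $k$-scheme $X$, the conjecture $\mathrm{S}(X)$ holds if and only if $U(X)_\bbQ$ is Schur-finite in $\NMot(k)_\bbQ$, since Schur-finiteness is stable under Tate twists and is detected by the canonical functor $\mathrm{DM}_{\mathrm{gm}}(k)_\bbQ\to\NMot(k)_\bbQ$. Combining this with the elementary fact that, in a $\bbQ$-linear idempotent-complete symmetric monoidal category, a finite direct sum is Schur-finite precisely when each of its summands is, the displayed isomorphism yields at once the desired equivalence $\mathrm{S}(T)\Leftrightarrow\mathrm{S}(B)+\mathrm{S}(Z_2)+\mathrm{S}(Z_3)$.

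The main obstacle — and the reason the detour through noncommutative motives is essential — is precisely the treatment of the twisted pieces $\perf(Z_d;\cF_d)$: the Brauer classes of $\cF_2$ and $\cF_3$ are typically nonzero, so these categories are not Morita equivalent to $\perf(Z_d)$, and it is only after passing to noncommutative motives with rational coefficients that the twist disappears. One must therefore verify that Kuznetsov's semiorthogonal components are (smooth, possibly non-proper) dg categories to which this rank-inverting isomorphism applies, check the compatibility of Kuznetsov's $B$-linear decomposition with the absolute noncommutative-motivic formalism over $k$, and confirm the smoothness of $Z_2$ and $Z_3$ under the standing hypotheses $\mathrm{char}(k)\notin\{2,3\}$ and $T$ smooth; these are the points where the bulk of the work (as opposed to the formal motivic bookkeeping) will lie.
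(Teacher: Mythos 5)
Your proposal is correct and follows essentially the same route as the paper: Kuznetsov's decomposition $\perf(T)=\langle \perf(B),\perf(Z_2;\cF_2),\perf(Z_3;\cF_3)\rangle$ together with the smoothness of $B$, $Z_2$, $Z_3$, the fact that $U$ sends semi-orthogonal decompositions to direct sums, the Tabuada--Van den Bergh Azumaya result to remove the twists (the paper inverts $6$, which suffices since the orders of $\cF_2,\cF_3$ are $2$ and $3$; rationally this is the same), and then the bridge \eqref{eq:diagram-big} plus stability of Schur-finiteness under direct sums and summands. The only cosmetic difference is that the paper's criterion \eqref{eq:equivalence} is phrased in terms of $(\Phi\circ\pi)(M(X)_\bbQ)\simeq\uHom(U(X)_\bbQ,U(k)_\bbQ)$ rather than $U(X)_\bbQ$ itself, but since the decomposition passes through this additive functor the argument is unaffected.
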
 
To the best of the author's knowledge, Theorem \ref{thm:Pezzo} is new in the literature. It leads to a proof of the Schur-finiteness conjecture in new cases. Here is an example:
\begin{corollary}[Families of sextic du Val del Pezzo surfaces over curves]\label{cor:Pezzo}
Let $f\colon T \to B$ be a family of sextic du Val del Pezzo surfaces as in Theorem \ref{thm:Pezzo} with $B$ a curve. In this case, the conjecture $\mathrm{S}(T)$ holds.
\end{corollary}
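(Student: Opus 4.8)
The plan is to deduce the statement directly from Theorem~\ref{thm:Pezzo}. Since $f\colon T\to B$ is a family of sextic du Val del Pezzo surfaces \emph{as in} that theorem, we have $\mathrm{char}(k)\notin\{2,3\}$ and $T$ is $k$-smooth; hence Theorem~\ref{thm:Pezzo} applies verbatim and yields the equivalence of conjectures $\mathrm{S}(T)\Leftrightarrow \mathrm{S}(B)+\mathrm{S}(Z_2)+\mathrm{S}(Z_3)$. It therefore suffices to verify the three conjectures $\mathrm{S}(B)$, $\mathrm{S}(Z_2)$ and $\mathrm{S}(Z_3)$ appearing on the right-hand side; the conclusion $\mathrm{S}(T)$ then follows from the implication ``$\Leftarrow$''.

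To verify these three conjectures I would argue by dimension. As $B$ is a curve, $\dim(B)\leq 1$. Moreover, by construction the structure morphisms $Z_2\to B$ and $Z_3\to B$ are finite (flat) of degrees $3$ and $2$ respectively, so $\dim(Z_2)\leq\dim(B)\leq 1$ and $\dim(Z_3)\leq\dim(B)\leq 1$. Thus $B$, $Z_2$ and $Z_3$ are $k$-schemes of dimension $\leq 1$, and one invokes the fact --- due to the independent work of Guletskii~\cite{Guletskii} and Mazza~\cite{Mazza}, recalled in the Introduction --- that the Schur-finiteness conjecture holds in dimension $\leq 1$. This gives $\mathrm{S}(B)$, $\mathrm{S}(Z_2)$ and $\mathrm{S}(Z_3)$, whence $\mathrm{S}(T)$.

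The single delicate point, and the place I expect will require actual argument rather than formal manipulation, is that $Z_2$ and $Z_3$ are merely finite flat over $B$ and so need not be $k$-smooth (nor reduced, nor irreducible), whereas the one-dimensional case of the conjecture is most naturally phrased for smooth curves. I would handle this exactly as in the proofs of Guletskii and Mazza: one first passes to the underlying reduced scheme (which leaves the rational geometric motive unchanged), and then resolves the singularities of a reduced curve via its normalization $\widetilde{Z_i}\to Z_i$, so that the associated localization/abstract-blow-up distinguished triangles express $M(Z_2)_\bbQ$ and $M(Z_3)_\bbQ$ in terms of the motive of a \emph{smooth} curve and of finitely many zero-dimensional $k$-schemes. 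Since the motive of a smooth curve is even finite-dimensional in the sense of Kimura (by Guletskii~\cite{Guletskii}, building on Kimura~\cite{Kimura}), and the relevant closure properties --- under direct sums, direct summands, and the particular cones occurring here --- are available in $\mathrm{DM}_{\mathrm{gm}}(k)_\bbQ$, it follows that $M(Z_2)_\bbQ$ and $M(Z_3)_\bbQ$ are Schur-finite. Everything else is a formal consequence of Theorem~\ref{thm:Pezzo} together with the already-established one-dimensional case of the Schur-finiteness conjecture.
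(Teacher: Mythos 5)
Your reduction is the same as the paper's: apply Theorem \ref{thm:Pezzo} to trade $\mathrm{S}(T)$ for $\mathrm{S}(B)+\mathrm{S}(Z_2)+\mathrm{S}(Z_3)$, observe that $Z_2\to B$ and $Z_3\to B$ are finite flat so all three schemes have dimension $\leq 1$, and invoke Guletskii \cite{Guletskii} and Mazza \cite{Mazza}. The one place where you diverge is the ``delicate point'' about $Z_2$ and $Z_3$ possibly failing to be $k$-smooth, which you propose to repair by passing to reduced schemes, normalizing, and using abstract blow-up/localization triangles. That detour is unnecessary: under the hypotheses of Theorem \ref{thm:Pezzo} (namely $\mathrm{char}(k)\notin\{2,3\}$ and $T$ $k$-smooth), Kuznetsov's result \cite[Prop.~5.12]{Pezzo} --- recalled at the very start of the paper's proof of Theorem \ref{thm:Pezzo} --- guarantees that $B$, $Z_2$ and $Z_3$ are themselves $k$-smooth, so the dimension~$\leq 1$ case of the conjecture applies to them directly. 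In fact this smoothness is not optional within the paper's framework: the conjecture $\mathrm{S}(-)$ is only formulated for $X\in\mathrm{Sm}(k)$, and the bridge diagram \eqref{eq:diagram-big} used to prove Theorem \ref{thm:Pezzo} is likewise restricted to smooth schemes, so the statements $\mathrm{S}(Z_2)$ and $\mathrm{S}(Z_3)$ already presuppose it. Your normalization argument would instead require extending the discussion to motives of singular curves (hence cdh-descent triangles, which in positive characteristic need additional input), a complication the smoothness result lets you avoid entirely; with that substitution your proof collapses to the intended two-line argument and is correct.
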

\begin{remark}
Let $f\colon T \to B$ be a family of sextic du Val del Pezzo surfaces as in Theorem \ref{thm:Pezzo}. To the best of the author's knowledge, the associated geometric mixed motive $M(T)_\bbQ$ has {\em not} been ``computed'' (in any non-trivial particular case). Nevertheless, consult Helmsauer \cite{thesis} for the ``computation'' of the Chow motive $\mathfrak{h}(X)_\bbQ$ of certain {\em smooth} (projective) del Pezzo surfaces $X$.  
\end{remark}
\subsection*{Bass-finiteness conjecture}
Let $k$ be a finite base field and $X$ a smooth $k$-scheme of finite type. The Bass-finiteness conjecture $\mathrm{B}(X)$ (see \cite[\S9]{Bass}) is one of the oldest and most important conjectures in algebraic $K$-theory. It asserts that the algebraic $K$-theory groups $K_n(X), n \geq 0$, are finitely generated. In the same vein, given an integer $r \geq 2$, we can consider the conjecture $\mathrm{B}(X)_{1/r}$, where $K_n(X)$ is replaced by $K_n(X)_{1/r}:=K_n(X) \otimes \bbZ[1/r]$. Our fourth main result is the following:
\begin{theorem}\label{thm:Bass}
The following holds:
\begin{itemize}
\item[(i)] Theorem \ref{thm:main} and Corollaries \ref{cor:main}-\ref{cor:main2} hold\footnote{Corollary \ref{cor:main2} (for the conjecture $\mathrm{B}(-)_{1/2}$) can also be applied to the case where $B$ is (an open subscheme of) an abelian surface; see \cite[Cor.~70 and Thm.~82]{Kahn}.} similarly for the conjecture $\mathrm{B}(-)_{1/2}$. In Corollary \ref{cor:main}, the groups $K_n(Q)_{1/2}, n \geq 2$, are moreover finite.
\item[(ii)] Theorem \ref{thm:intersection} holds similarly for the conjecture $\mathrm{B}(-)$. 
\item[(iii)] Corollary \ref{cor:intersection} holds similarly for the conjecture $\mathrm{B}(-)_{1/2}$. In the case where $Y$ is a smooth complete intersection of two quadric hypersurfaces, the groups $K_n(Y)_{1/2}, n \geq 2$, are moreover finite.
\item[(iv)] Theorem \ref{thm:Pezzo} and Corollary \ref{cor:Pezzo} hold similarly for the conjecture $\mathrm{B}(-)_{1/6}$. In Corollary \ref{cor:Pezzo}, the groups $K_n(T)_{1/6}, n \geq 2$, are moreover finite.
\end{itemize}
\end{theorem}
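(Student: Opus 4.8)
The plan is to re-run the proofs of Theorems~\ref{thm:main}, \ref{thm:intersection} and~\ref{thm:Pezzo} essentially verbatim. Those proofs rest on two ingredients: first, a semiorthogonal decomposition of the derived category of $Q$ (resp. of $T$) --- or rather, of the associated twisted root stacks --- into noncommutative mixed motives of smooth $k$-schemes twisted by sheaves of Azumaya algebras (even Clifford algebras in the quadric case, the sheaves of algebras attached to $Z_2$ and $Z_3$ in the del Pezzo case); second, the fact that Schur-finiteness of a smooth $k$-scheme $X$ is equivalent to Schur-finiteness of the noncommutative mixed motive $U(\perf(X))$. To obtain the Bass-finiteness analogues I would keep the first ingredient \emph{unchanged} and replace the second by the tautology that, over a finite field $k$, finite generation of $K_\ast(X)$ is finite generation of $K_\ast(\perf(X))$, together with the fact that algebraic $K$-theory is an additive --- indeed localizing --- invariant of dg categories. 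Consequently every semiorthogonal decomposition used in the Schur-finiteness arguments induces a direct sum decomposition $K_n(Q)\simeq\bigoplus_i K_n(\cT_i)$ over its pieces $\cT_i$, so $\mathrm{B}(Q)$ will hold if and only if $\mathrm{B}(\cT_i)$ holds for each $i$, and likewise with $\bbZ[1/r]$-coefficients.

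It will then remain to control the $K$-theory of the twisted pieces $\cT_i = \perf(Y;\cF)$. The key input I would use is that, $\bbZ[1/r]$-linearly, the noncommutative mixed motive of a sheaf of Azumaya algebras of index dividing $r$ is a direct summand of the noncommutative mixed motive of a finite \'etale splitting scheme $Y'$ (established via a transfer/restriction argument along a finite \'etale morphism whose degree is prime to the residue characteristics). Hence $K_n(\perf(Y;\cF))_{1/r}$ will be a retract of $K_n(Y')_{1/r}$, so $\mathrm{B}(Y')_{1/r}\Rightarrow\mathrm{B}(\perf(Y;\cF))_{1/r}$; conversely the schemes $Y'$ so produced are already among the strata (or finite covers thereof) occurring in the decomposition, so the two sides are genuinely equivalent. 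For quadric fibrations the relevant index divides $2$, which forces the coefficients $\bbZ[1/2]$ in items~(i) and~(iii); for families of sextic du Val del Pezzo surfaces the sheaves of algebras on $Z_2$ and $Z_3$ have index dividing $6$, which forces $\bbZ[1/6]$ in~(iv). For the reduction from $\{\mathrm{S}(V_i)\}$ to $\mathrm{S}(B)$ in Corollary~\ref{cor:main2}, and the equivalence $\mathrm{S}(X)\Leftrightarrow\mathrm{S}(U)$ for $U$ open in a surface $X$, I would invoke the localization sequence $K(Z)\to K(X)\to K(U)$: for $k$ finite and $Z$ of dimension $\leq 1$ the groups $K_\ast(Z)$ are finitely generated (Quillen, Harder), so finite generation passes between $X$ and $U$; this gives the Bass-finiteness analogues of Corollaries~\ref{cor:main}--\ref{cor:main2}.

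For the stronger ``moreover'' assertions one replaces ``finitely generated'' by ``finite in degrees $\geq 2$''. When $B$ is a curve (Corollaries~\ref{cor:main} and~\ref{cor:Pezzo}), every stratum in the relevant decomposition is a curve or a finite $k$-scheme, for which $K_n(-)$ is finite for $n\geq 2$; additivity of $K$-theory then forces $K_n(Q)_{1/2}$, resp. $K_n(T)_{1/6}$, to be finite for $n\geq 2$. The same applies to a smooth complete intersection $Y$ of \emph{two} quadrics, whose associated quadric fibration lives over $\bbP^1$ and for which $K(Y)$ is, up to Tate twists, a direct summand of $K(Q)$ --- hence the finiteness in~(iii). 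For three odd-dimensional quadrics the base is a surface, and the argument proceeds through the equivalence $\mathrm{B}(X)_{1/2}\Leftrightarrow\mathrm{B}(U)_{1/2}$ for $U$ open in $X$, whose localization error term is $K_1$ of a curve --- finitely generated but not finite; this route therefore yields only finite generation, in accordance with the statement.

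I expect the main obstacle to be exactly the one already dictating the coefficients in Theorems~\ref{thm:main} and~\ref{thm:Pezzo}: the twisted Clifford/Azumaya pieces. One must verify carefully that inverting the index $r$ genuinely realises $K_\ast(\perf(Y;\cF))_{1/r}$ as a retract of $K_\ast(Y')_{1/r}$ for a suitable split cover $Y'$ lying within the class of schemes whose Bass-finiteness is assumed or already known, and keep precise track of the index ($2$ for even Clifford algebras, $6$ for the del Pezzo algebras) together with the excluded characteristics. Everything else --- additivity of $K$-theory, the localization sequences, and the finiteness of $K_\ast$ for curves and $0$-dimensional schemes over finite fields --- is standard, which is why I expect the Bass-finiteness statements to run in parallel with the Schur-finiteness ones with no new structural input.
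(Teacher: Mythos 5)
Your overall strategy---reuse the derived-category/motivic decompositions, replace Schur-finiteness by finite generation, and feed in Quillen--Grayson for curves, Harder for the rational vanishing in degrees $\geq 2$, and localization for the open-cover reductions---is the same as the paper's, which runs everything through the agreement isomorphism $\Hom_{\NMot(k)}(U(k),\Sigma^{-n}U(X))\simeq K_n(X)$ applied to Propositions \ref{prop:computation}, \ref{prop:Pezzo} and \ref{prop:curve}. The genuine gap is your treatment of the Azumaya-twisted pieces. The two-way equivalences in items (i) (for $d$ even) and (iv) require the identifications $K_n(\widetilde{B};\cF)_{1/2}\simeq K_n(\widetilde{B})_{1/2}$ and $K_n(Z_2;\cF_2)_{1/2}\simeq K_n(Z_2)_{1/2}$, $K_n(Z_3;\cF_3)_{1/3}\simeq K_n(Z_3)_{1/3}$, \ie the fact that inverting the rank kills the Azumaya twist (\cite[Thm.~2.1]{Azumaya}), which is exactly what is packaged in Propositions \ref{prop:computation}(i) and \ref{prop:Pezzo}. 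Your substitute---a transfer/restriction argument along a finite \'etale splitting cover $Y'\to Y$---only shows (granting the existence of such a cover of controlled degree, which itself needs justification) that $K_n(Y;\cF)_{1/r}$ is a retract of $K_n(Y')_{1/r}$. That yields at best the one-way implication $\mathrm{B}(Y')_{1/r}\Rightarrow\mathrm{B}(Y;\cF)_{1/r}$, where $Y'$ is \emph{not} among the schemes $B$, $\widetilde{B}$, $Z_2$, $Z_3$, $\widetilde{D}_i$ occurring in the statements; your claim that the splitting covers are ``already among the strata'' is unjustified (and in general false), and even granting it a retract statement does not give the equivalences $\mathrm{B}(Q)_{1/2}\Leftrightarrow\mathrm{B}(B)_{1/2}+\mathrm{B}(\widetilde{B})_{1/2}$ and $\mathrm{B}(T)_{1/6}\Leftrightarrow\mathrm{B}(B)_{1/6}+\mathrm{B}(Z_2)_{1/6}+\mathrm{B}(Z_3)_{1/6}$. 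The fix is simply to quote Propositions \ref{prop:computation} and \ref{prop:Pezzo} (or \cite[Thm.~2.1]{Azumaya} directly), as the paper does.

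Two smaller points. In the $d$ odd cases there is no global semiorthogonal decomposition: Proposition \ref{prop:computation}(ii) only places $U(Q)_{1/2}$ in the thick triangulated subcategory generated by the $U(V_i)_{1/2}$ and $U(\widetilde{D}_i)_{1/2}$ via Mayer--Vietoris triangles, so ``additivity of $K$-theory'' is not enough; you need, as the paper notes, that finite generation (and, for the ``moreover'' claims, torsionness/finiteness) has the 2-out-of-3 property in long exact sequences and is stable under retracts---your localization remarks show you have the tools, but the writeup conflates the direct-sum and triangulated situations. For the finiteness assertions, your claim that $K_n$ of a curve over a finite field is finite for $n\geq 2$ is precisely the combination of Quillen--Grayson finite generation with Harder's rational vanishing (the paper isolates the latter as Proposition \ref{prop:curve}, proved for non-affine curves by a Mayer--Vietoris induction), so that part is correct once stated with this justification.
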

\section{Preliminaries}
In what follows, all schemes/stacks are of finite type over the perfect base~field~$k$.
\subsection*{Dg categories} For a survey on dg categories we invite the reader to consult \cite{ICM-Keller}. In what follows, we will write $\dgcat(k)$ for the category of (essentially small) dg categories and dg functors. Every (dg) $k$-algebra $A$ gives naturally rise to a dg category with a single object. Another source of examples is provided by schemes/stacks. Given a $k$-scheme $X$ (or stack $\cX$), the category of perfect complexes of $\cO_X$-modules $\perf(X)$ admits a canonical dg enhancement $\perf_\dg(X)$; consult \cite[\S4.6]{ICM-Keller}\cite{LO} for details. More generally, given a sheaf of $\cO_X$-algebras $\cF$, we can consider the dg category of perfect complexes of $\cF$-modules $\perf_\dg(X;\cF)$.

\subsection*{Noncommutative mixed motives} For a book, resp. recent survey, on noncommutative motives we invite the reader to consult \cite{book}, resp. \cite{survey}. Recall from~\cite[\S8.5.1]{book} (see also \cite{Miami, finMot, IAS}) the definition of Kontsevich's triangulated category of noncommutative mixed motives $\mathrm{NMot}(k)$. By construction, this category is idempotent complete, symmetric monoidal, and comes equipped with a $\otimes$-functor $U \colon \dgcat(k) \to \mathrm{NMot}(k)$. In what follows, given a $k$-scheme $X$ (or stack $\cX$) equipped with a sheaf of $\cO_X$-algebras $\cF$, we will write $U(X;\cF):=U(\perf_\dg(X;\cF))$.

\section{Noncommutative mixed motives of twisted root stacks}\label{sec:root}
Let $X$ be a $k$-scheme, $\cL$ a line bundle on $X$, $\sigma \in \Gamma(X,\cL)$ a global section, and $r>0$ an integer. In what follows, we will write $D\subset X$ for the zero locus of $\sigma$. Recall from \cite[Def.~2.2.1]{Codman} (see also \cite[Appendix B]{GW}) that the associated {\em root stack} $\cX$ is defined as the following fiber-product of algebraic stacks
$$
\xymatrix{
\cX:=\sqrt[r]{(\cL,\sigma)/X} \ar[d]_-p \ar[r] & [\bbA^1/\bbG_m] \ar[d]^-{\theta_r} \\
X \ar[r]_-{(\cL,\sigma)} & [\bbA^1/\bbG_m]\,,
}
$$
where $\theta_r$ stands for the morphism induced by the $r^{\mathrm{th}}$ power maps on $\bbA^1$ and $\bbG_m$. A {\em twisted root stack} $(\cX;\cF)$ consists of a root stack $\cX$ equipped with a sheaf of Azumaya algebras $\cF$. In what follows, we will write $s$ for the product of the ranks of $\cF$ (at each one of the connected components of $\cX$). The following result, which is of independent interest, will play a key role in the proof of Theorem \ref{thm:main}.
\begin{theorem}\label{thm:aux}
Assume that $X$ and $D$ are $k$-smooth.
\begin{itemize}
\item[(i)] We have an isomorphism $U(\cX)\simeq U(X) \oplus U(D)^{\oplus (r-1)}$.
\item[(ii)] Assume moreover that $\mathrm{char}(k)\neq r$ and that $k$ contains the $r^{\mathrm{th}}$ roots of unity. Under these extra assumptions, $U(\cX;\cF)_{1/rs}$ belongs to the smallest thick triangulated subcategory of $\NMot(k)_{1/rs}$ containing the noncommutative mixed motives $\{U(V_i)_{1/rs}\}$ and $\{U(\widetilde{D}_i^l)_{1/rs}\}$, where $V_i$ is any affine open subscheme of $X$ and $\widetilde{D}_i^l$ is any Galois $l$-fold cover of $D_i:=D\cap V_i$ with $l\nmid r$ and $l\neq 1$.
\end{itemize}
\end{theorem}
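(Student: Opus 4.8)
The plan is to reduce both parts to a semiorthogonal decomposition of the dg category of perfect complexes on the (twisted) root stack, combined with the fact that the functor $U\colon\dgcat(k)\to\NMot(k)$, being an additive invariant, carries semiorthogonal decompositions to direct sums. For part~(i), recall the root-stack semiorthogonal decomposition
\[
\perf_\dg(\cX)\;=\;\big\langle\,\underbrace{\perf_\dg(D),\,\ldots,\,\perf_\dg(D)}_{r-1},\,\perf_\dg(X)\,\big\rangle\,,
\]
in which the last building block is the essential image of the fully faithful pull-back $p^\ast$ (fully faithful because $Rp_\ast\cO_\cX\simeq\cO_X$) and the remaining $r-1$ building blocks are the essential images of $\perf_\dg(D)$ under twists by successive powers of the tautological root line bundle; this holds under the stated smoothness hypotheses on $X$ and $D$ (see \cite{Codman} and its sequels). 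Since $U$ sends this decomposition to a direct sum (see \cite{book}), and since $\NMot(k)$ moreover collapses Tate twists so that the twists in the divisorial blocks are invisible, part~(i) follows at once.

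For part~(ii), under the extra hypotheses $\mathrm{char}(k)\neq r$ and $\mu_r\subset k$ we have $\mu_r\simeq\bbZ/r$, so the residual $\mu_r$-gerbe $\cD\subset\cX$ supported on $D$ is banded by the constant group $\bbZ/r$. The first step is to upgrade the above decomposition to the twisted setting: tensoring it with $\cF$ and decomposing the divisorial blocks according to the $\bbZ/r$-weight (legitimate because $\mu_r\subset k$ splits $\Rep(\bbZ/r)$), one obtains a semiorthogonal decomposition of $\perf_\dg(\cX;\cF)$ with one building block $\perf_\dg(X;\cF_X)$ and $r-1$ building blocks $\perf_\dg(D;\cF_D)$, where $\cF_X$, resp.\ $\cF_D$, is a sheaf of Azumaya algebras on $X$, resp.\ $D$, canonically attached to $\cF$. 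Here one uses crucially that the Brauer class of the gerbe $\cD$ is the image of $\cL|_D$ under the Kummer boundary map $\Pic(D)\to\Br(D)$, hence vanishes, so that the weight-$w$ divisorial blocks carry no twist beyond $\cF_D$; moreover the ranks of $\cF_X$ and $\cF_D$ divide $s$, so the orders of $[\cF_X]$ and $[\cF_D]$ in the respective Brauer groups divide $s$. Applying $U(-)_{1/rs}$ and again using that additive invariants turn semiorthogonal decompositions into direct sums, we reduce to placing $U(X;\cF_X)_{1/rs}$ and $U(D;\cF_D)_{1/rs}$ inside the prescribed thick triangulated subcategory.

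For the base block, Nisnevich (indeed Zariski) descent for the localizing invariant $U(-)_{1/rs}$, applied to a finite affine open cover $X=\bigcup_iV_i$, places $U(X;\cF_X)_{1/rs}$ in the thick subcategory generated by the $U(V_i;\cF_X|_{V_i})_{1/rs}$; by the theory of noncommutative motives of Azumaya algebras of Tabuada--Van den Bergh one has $U(V_i;\cF_X|_{V_i})_{1/s}\simeq U(V_i)_{1/s}$, which settles this block. For each divisorial block, the same descent argument applied to $D=\bigcup_iD_i$ reduces to $U(D_i;\cF_D|_{D_i})_{1/rs}$; choosing a Galois $l$-fold cover $\pi\colon\widetilde{D}_i^l\to D_i$ splitting $\cF_D|_{D_i}$ — which, after possibly enlarging it by an auxiliary Galois cover, can be taken with $l\neq 1$, $l\mid rs$ and $l\nmid r$, the point being that the residual $\bbZ/r$-torsion phenomena are killed outright by inverting $r$ so that only the genuinely Azumaya contribution of $\cF$ forces such a cover — one has $\perf_\dg(\widetilde{D}_i^l;\pi^\ast(\cF_D|_{D_i}))\simeq\perf_\dg(\widetilde{D}_i^l)$ by Morita equivalence, and the standard transfer argument for finite \'etale Galois covers (valid verbatim in the presence of an Azumaya algebra, the composite of pull-back and push-forward being multiplication by $l$) exhibits $U(D_i;\cF_D|_{D_i})_{1/rs}$ as a direct summand of $U(\widetilde{D}_i^l)_{1/rs}$. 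Assembling the two reductions — and observing that when $\cF_D$ happens to be split one still reaches $U(D)_{1/rs}$ from affine opens of $X$ through the localization triangle and d\'evissage — completes the argument.

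I expect the main obstacle to be the construction of the twisted semiorthogonal decomposition of $\perf_\dg(\cX;\cF)$ together with the precise identification of the Azumaya algebras $\cF_X$, $\cF_D$ and of the orders of their Brauer classes: it is exactly this bookkeeping that pins down the inverted primes (those of $rs$ and no more) and the degree constraint $l\nmid r$ on the covers that are permitted to appear. Once this structural input is secured, the Nisnevich descent, the Azumaya-algebra computation, and the transfer arguments are all formal.
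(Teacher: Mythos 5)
Your part (i) is fine and coincides with the paper's argument: the Ishii--Ueda semiorthogonal decomposition of $\perf(\cX)$ into $p^\ast\perf(X)$ and $r-1$ copies of $\perf(D)$, plus the fact that $U$ sends semiorthogonal decompositions to direct sums. (The aside about $\NMot(k)$ ``collapsing Tate twists'' is not needed: the blocks $\perf(D)_j$ are already equivalent to $\perf(D)$ as dg categories.)

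Part (ii), however, has a genuine gap, and it is exactly the step you flag as ``the main obstacle'': the twisted semiorthogonal decomposition of $\perf_\dg(\cX;\cF)$ with a base block of the form $\perf_\dg(X;\cF_X)$ for an Azumaya algebra $\cF_X$ on $X$. This is not mere bookkeeping to be supplied later; in the generality needed it is false. An Azumaya algebra $\cF$ on the root stack restricted to $\cX\setminus\cD\simeq X\setminus D$ has Brauer class that is in general ramified along $D$ (this is precisely the situation in the intended application, where Kuznetsov's even Clifford algebra fails to be Azumaya on $X$ along $D$ and only becomes Azumaya after passing to the root stack), so no Azumaya extension $\cF_X$ over $X$ exists and the proposed base block cannot be $\perf_\dg(X;\cF_X)$. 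Consequently your subsequent reductions (Azumaya invariance on the $V_i$, transfer along covers chosen to split $\cF_D$) rest on objects that need not exist, and they would in any case only produce covers of degrees tied to splitting $\cF_D$ rather than the covers $\widetilde{D}_i^l$ appearing in the statement. The paper's proof avoids any twisted decomposition of $\cX$: it first uses the Mayer--Vietoris triangles of Lemma \ref{lem:key} (your Zariski-descent step correctly parallels this, but applied to $X$ with a cover trivializing $\cL$, not to a hypothetical base block) to reduce to the case where $X=\Spec(A)$ is affine and $\cL$ is trivial; there the root stack is the global quotient $[\Spec(A[t]/\langle t^r-\sigma\rangle)/\mu_r]$, and the statement --- including the appearance of the Galois covers $\widetilde{D}^l$ of $D$ and the precise coefficients $\bbZ[1/rs]$ --- is imported from the Tabuada--Van den Bergh results on additive invariants of twisted global quotient orbifolds (\cite[Thm.~1.22 and Cor.~1.28(ii)]{Orbifold}), together with the identification of the geometric quotient with the smooth scheme $X$. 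So the missing idea is to trade the root stack for a $\mu_r$-global quotient locally on $X$ and invoke the orbifold machinery for the twist, rather than to seek an Azumaya-coefficient semiorthogonal decomposition over $X$ itself.
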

\begin{proof}
We start by proving item (i). Following \cite[Thm.~1.6]{Ueda}, the pull-back functor $p^\ast$ is fully-faithful and we have the following semi-orthogonal decomposition\footnote{Consult \cite{BO, BO1} for the definition of semi-orthogonal decomposition.} $\perf(X)=\langle \perf(D)_{r-1},\ldots,\perf(D)_1, p^\ast(\perf(X)\rangle$. All the categories $\perf(D)_j$ are equivalent (via a Fourier-Mukai type functor) to $\perf(D)$. Therefore, since the functor $U\colon \dgcat(k) \to \NMot(k)$ sends semi-orthogonal decompositions to direct sums, we obtain the searched direct sum decomposition $U(\cX) \simeq U(X) \oplus U(D)^{\oplus (r-1)}$.

Let us now prove item (ii). We consider first the particular case where $X=\mathrm{Spec}(A)$ is affine and the line bundle $\cL =\cO_X$ is trivial. Let $\mu_r$ be the group of $r^{\mathrm{th}}$ roots of unity and $\chi \colon \mu_r \to k^\times$ a (fixed) primitive character. Under these notations, consider the global quotient $[\mathrm{Spec}(B)/\mu_r]$, where $B:=A[t]/\langle t^r - \sigma\rangle$ and the $\mu_r$-action on $B$ is given by $g\cdot t:= \chi(g)^{-1} t$ for every $g \in \mu_r$ and by $g \cdot a := a$ for every $a \in A$. As explained in \cite[Example~2.4.1]{Codman}, the root stack $\cX$ agrees, in this particular case, with the global quotient $[\mathrm{Spec}(B)/\mu_r]$. By construction, the induced map $\mathrm{Spec}(B) \to X$ is a $r$-fold cover ramified over $D \subset X$. Moreover, for every $l$ such that $l\mid r$ and $l\neq 1$, the associated closed subscheme $\mathrm{Spec}(B)^{\mu_l}$ agrees with the ramification divisor $D \subset \mathrm{Spec}(B)$. Therefore, since the functor $U(-)_{1/rs}\colon \dgcat(k) \to \NMot(k)_{1/rs}$ is an additive invariant of dg categories in the sense of \cite[Def.~2.1]{book} (see \cite[\S8.4.5]{book}), we conclude from \cite[Cor.~1.28(ii)]{Orbifold} that, in this particular case, $U(\cX; \cF)_{1/rs}$ belongs to the smallest thick additive subcategory of $\NMot(k)_{1/rs}$ containing the noncommutative mixed motives $U(\mathrm{Spec}(B))^{\mu_l}_{1/rs}$ and $\{U(\widetilde{D}^l)_{1/rs}\}$, where $\widetilde{D}^l$ is any Galois $l$-fold cover of $D$ with $l \nmid r$ and $l\neq 1$. Furthermore, since the geometric quotient $\mathrm{Spec}(B)/\!\!/\mu_r$ agrees with $X$ and the latter scheme is $k$-smooth, \cite[Thm.~1.22]{Orbifold} implies that $U(\mathrm{Spec}(B))^{\mu_l}_{1/rs}$ is isomorphic to $U(X)_{1/rs}$. This finishes the proof of item (ii) in the particular case where $X$ is affine and the line bundle $\cL$ is trivial.

Let us now prove item (ii) in the general case. As explained above, given any affine open subscheme $V_i$ of $X$ which trivializes the line bundle $\cL$, the noncommutative mixed motive $U(\cV_i;\cF_i)_{1/rs}$, with $\cV_i:=p^{-1}(V_i)$ and $\cF_i:=\cF_{|\cV_i}$, belongs to the smallest thick additive subcategory of $\NMot(k)_{1/rs}$ containing $U(V_i)_{1/rs}$ and $\{U(\widetilde{D}^l_i)_{1/rs}\}$, where $\widetilde{D}^l_i$ is any Galois $l$-fold cover of $D_i:=D \cap V_i$ with $l\mid r$ and $l\neq 1$. Let us then choose an affine open cover $\{W_i\}$ of $X$ which trivializes the line bundle $\cL$. Since $X$ is quasi-compact (recall that $X$ is of finite type over $k$), this affine open cover admits a {\em finite} subcover. Consequently, the proof follows by induction from the $\bbZ[1/rs]$-linearization of the distinguished triangles of Lemma \ref{lem:key} below.
\end{proof}
\begin{lemma}\label{lem:key}
Given an open cover $\{W_1, W_2\}$ of $X$, we have an induced Mayer-Vietoris distinguished triangle of noncommutative mixed motives
\begin{equation}\label{eq:triangle}
U(\cX;\cF) \too U(\cW_1;\cF_1) \oplus U(\cW_2;\cF_2) \stackrel{\pm}{\too} U(\cW_{12}; \cF_{12}) \stackrel{\partial}{\too} \Sigma U(\cX;\cF)\,,
\end{equation}
where $\cW_{12}:=\cW_1 \cap \cW_2$ and $\cF_{12}:=\cF_{|\cW_{12}}$. 
\end{lemma}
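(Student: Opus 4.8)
The plan is to construct the Mayer--Vietoris triangle \eqref{eq:triangle} by reducing to the known Mayer--Vietoris (Zariski descent) property for the dg categories of perfect complexes attached to quasi-compact quasi-separated schemes/stacks equipped with a sheaf of algebras. First I would observe that the root stack construction is local on $X$ in the sense that for an open $W \subset X$ one has $p^{-1}(W) = \sqrt[r]{(\cL|_W, \sigma|_W)/W}$; this is immediate from the definition of $\cX$ as a fiber product, since restriction to $W$ commutes with fiber products and the morphism $(\cL,\sigma)\colon X \to [\bbA^1/\bbG_m]$ restricts to $(\cL|_W,\sigma|_W)$. Hence, given the open cover $\{W_1, W_2\}$ of $X$, the stacks $\cW_1 = p^{-1}(W_1)$, $\cW_2 = p^{-1}(W_2)$ and $\cW_{12} = p^{-1}(W_{12})$ form an open cover of $\cX$ in the appropriate sense, and the Azumaya sheaf $\cF$ restricts compatibly to each piece.

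Next I would invoke the fact that the functor $\perf_\dg(-;-)$ satisfies Zariski (indeed Nisnevich, even étale for the relevant coefficients, but Zariski suffices) descent: for a scheme or nice algebraic stack covered by two opens $\cW_1, \cW_2$ with intersection $\cW_{12}$, the square of dg categories
\[
\xymatrix{
\perf_\dg(\cX;\cF) \ar[r] \ar[d] & \perf_\dg(\cW_1;\cF_1) \ar[d] \\
\perf_\dg(\cW_2;\cF_2) \ar[r] & \perf_\dg(\cW_{12};\cF_{12})
}
\]
is a homotopy (co)cartesian square of dg categories. Applying the universal additive invariant $U$, which by construction sends such descent squares to distinguished triangles in $\NMot(k)$ (this is part of the defining properties of $\NMot(k)$; every additive invariant, and in particular $U$, has the Mayer--Vietoris property for Zariski squares, as $K$-theory does), yields precisely the triangle
\[
U(\cX;\cF) \too U(\cW_1;\cF_1) \oplus U(\cW_2;\cF_2) \stackrel{\pm}{\too} U(\cW_{12};\cF_{12}) \stackrel{\partial}{\too} \Sigma U(\cX;\cF)\,,
\]
with the two maps into the middle term being the difference of the two restriction maps.

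The one point requiring genuine care — and the step I expect to be the main obstacle — is verifying that the descent statement for $\perf_\dg(-;\cF)$ applies verbatim when the geometric object is the root stack $\cX$ rather than an ordinary scheme, and when one twists by a sheaf of Azumaya algebras $\cF$. For this I would note that $\cX$ is a tame Deligne--Mumford stack (étale-locally, as recalled in the proof of Theorem~\ref{thm:aux}, it is a quotient $[\mathrm{Spec}(B)/\mu_r]$ with $\mu_r$ linearly reductive since $\mathrm{char}(k)\neq r$), so it has a well-behaved category of perfect complexes with a dg enhancement, and Zariski descent for $\perf_\dg$ on such stacks is standard (it reduces, via the quotient description, to equivariant descent, or one simply uses that the pullbacks along the open immersions $\cW_i \hookrightarrow \cX$ are flat with the requisite gluing data). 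Twisting by $\cF$ is harmless: $\perf_\dg(-;\cF)$ is the dg category of perfect complexes of modules over the sheaf of algebras $\cF$, and the gluing/localization arguments are insensitive to this extra module structure since $\cF$ restricts compatibly along all the open immersions involved. Once this is granted, the triangle is simply the image under $U$ of the descent square, and there is nothing further to prove; the $\bbZ[1/rs]$-linearized version used in the proof of Theorem~\ref{thm:aux} is obtained by applying the localization functor $-_{1/rs}$, which is exact and hence preserves distinguished triangles.
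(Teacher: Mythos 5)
Your overall strategy (restrict the root stack and the Azumaya twist to the opens, then feed a Zariski descent square for $\perf_\dg(-;\cF)$ into $U$) points in the right direction, but the pivotal step is misjustified in a way that is not merely a citation issue. You assert that ``every additive invariant, and in particular $U$, has the Mayer--Vietoris property for Zariski squares, as $K$-theory does'' and that $U$ ``by construction sends such descent squares to distinguished triangles.'' Neither claim is correct. Additivity only guarantees that split short exact sequences (semi-orthogonal decompositions) go to direct sums; it does \emph{not} give Mayer--Vietoris for open covers --- connective $K$-theory is additive and famously fails Zariski Mayer--Vietoris, the obstruction being negative $K$-theory. Likewise, no (additive or localizing) invariant sends arbitrary homotopy cartesian squares of dg categories to (co)cartesian squares in $\NMot(k)$; this is false in general, so even granting Zariski descent for $\perf_\dg(-;\cF)$ on the root stack, your final step does not follow ``by construction.'' The property that is actually needed, and that the paper invokes, is that $U$ is a \emph{localizing} invariant, i.e.\ it sends Drinfeld/Keller short exact sequences of dg categories to distinguished triangles.

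What the paper does, and what your argument is missing, is the excision step that converts localization into Mayer--Vietoris. One forms the two short exact sequences
$\perf_\dg(\cX;\cF)_\cZ \to \perf_\dg(\cX;\cF) \to \perf_\dg(\cW_1;\cF_1)$ and
$\perf_\dg(\cW_2;\cF_2)_\cZ \to \perf_\dg(\cW_2;\cF_2) \to \perf_\dg(\cW_{12};\cF_{12})$,
where $\cZ = \cX - \cW_1 = \cW_2 - \cW_{12}$ and the subscript denotes complexes supported on $\cZ$, and observes that the restriction dg functor $\perf_\dg(\cX;\cF)_\cZ \to \perf_\dg(\cW_2;\cF_2)_\cZ$ is a Morita equivalence (complexes supported on a closed subset only see an open neighbourhood of it). Applying the localizing invariant $U$ gives a map of distinguished triangles whose left vertical arrow is an isomorphism, so the middle square is homotopy (co)cartesian in $\NMot(k)$, which is exactly the triangle \eqref{eq:triangle}. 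This supported-subcategory argument is also what makes the ``twisting is harmless'' and ``stacky case is fine'' remarks precise, since the localization sequences and the excision equivalence are stated directly for $\cF$-modules on $\cX$ and its open substacks. If you prefer to quote a descent theorem instead of reproving it, you must cite one for \emph{localizing} invariants (not additive ones), and verify it covers twisted perfect complexes on the root stack --- at which point you are, in effect, re-running the argument above. Your remarks on $p^{-1}(W)$ being the root stack of the restricted data and on inverting $rs$ at the end are fine, but the core of the lemma is the localization-plus-excision step, and as written your proposal does not supply it.
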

\begin{proof}
Consider the following commutative diagram of dg categories
$$
\xymatrix{
\perf_\dg(\cX;\cF)_\cZ \ar[d] \ar[r] & \perf_\dg(\cX;\cF) \ar[d] \ar[r] & \perf_\dg(\cW_1;\cF_1) \ar[d] \\
\perf_\dg(\cW_2;\cF_2)_\cZ \ar[r] & \perf_\dg(\cW_2;\cF_2) \ar[r] & \perf_\dg(\cW_{12}; \cF_{12})\,,
}
$$
where $\cZ$ stands for the closed complement $\cX- \cW_1 = \cW_2 - \cW_{12}$ and $\perf_\dg(\cX;\cF)_\cZ$, resp. $\perf_\dg(\cW_2; \cF_2)_\cZ$, stands for the full dg subcategory of $\perf_\dg(\cX;\cF)$, resp. $\perf_\dg(\cW_2;\cF_2)$, consisting of those perfect complexes of $\cF$-modules, resp. $\cF_2$-modules, that are supported on $\cZ$. Both rows are short exact sequences of dg categories in the sense of Drinfeld/Keller (see \cite[\S4.6]{ICM-Keller}) and the left vertical dg functor is a Morita equivalence. Therefore, since the functor $U\colon \dgcat(k) \to \NMot(k)$ is a localizing invariant of dg categories in the sense of \cite[\S8.1]{book}, we obtain the following induced morphism of distinguished triangles:
$$
\xymatrix@C=1.7em@R=2.5em{
U(\perf_\dg(\cX;\cF)_\cZ) \ar[d]^-\simeq \ar[r] & U(\cX;\cF) \ar[d] \ar[r] & U(\cW_1;\cF_1) \ar[d] \ar[r]^-\partial & \Sigma U(\perf_\dg(\cX;\cF)_\cZ) \ar[d]^-\simeq \\
U(\perf_\dg(\cW_2;\cF_2)_\cZ) \ar[r] & U(\cW_2;\cF_2) \ar[r] & U(\cW_{12};\cF_{12}) \ar[r]^-\partial & \Sigma U(\perf_\dg(\cW_2;\cF_2)_\cZ)\,.
}
$$
Finally, since the middle square is homotopy (co)cartesian, we hence obtain the claimed Mayer-Vietoris distinguished triangle \eqref{eq:triangle}.
\end{proof}
\section{Proof of Theorem \ref{thm:main}}\label{sec:proof}
Following \cite[\S3]{Quadrics} (see also \cite[\S1.2]{ABB}), let $E$ be a vector bundle of rank $d$ on $B$, $q'\colon \bbP(E) \to B$ the projectivization of $E$ on $B$, $\cO_{\bbP(E)}(1)$ the Grothendieck line bundle on $\bbP(E)$, $\cL$ a line bundle on $B$, and finally $\rho\in \Gamma(B, S^2(E^\vee) \otimes \cL^\vee) = \Gamma(\bbP(E), \cO_{\bbP(E)}(2)\otimes \cL^\vee)$ a global section. Given this data, recall that $Q\subset \bbP(E)$ is defined as the zero locus of $\rho$ on $\bbP(E)$ and that $q\colon Q \to B$ is the restriction of $q'$ to $Q$; note that the relative dimension of $q$ is equal to $d-2$. Consider also the discriminant global section $\mathrm{disc}(q) \in \Gamma(B, \mathrm{det}(E^\vee)^{\otimes 2} \otimes (\cL^\vee)^{\otimes d})$ and the associated zero locus $D\subset B$; note that $D$ agrees with the locus of the critical values of $q$. 

Recall from \cite[\S3.5]{Quadrics} (see also \cite[\S1.6]{ABB}) that when $d$ is even, we can consider the {\em discriminant cover} $\widetilde{B}:=\mathrm{Spec}_B(Z(\cC l_0(q)))$ of $B$, where $Z(\cC l_0(q))$ stands for the center of the sheaf $\cC l_0(q)$ of even parts of the Clifford algebra associated to $q$; see \cite[\S3]{Quadrics} (and also \cite[\S1.5]{ABB}). By construction, $\widetilde{B}$ is a $2$-fold cover ramified over $D$. Moreover, since $D$ is $k$-smooth, $\widetilde{B}$ is also $k$-smooth.

Recall from \cite[\S3.6]{Quadrics} (see also \cite[\S1.7]{ABB}) that when $d$ is odd and $\mathrm{char}(k)\neq 2$, we can consider the {\em discriminant stack} $\cX:=\sqrt[2]{(\mathrm{det}(E^\vee)^{\otimes 2} \otimes (\cL^\vee)^{\otimes d}, \mathrm{disc}(q))/B}$. Since $\mathrm{char}(k)\neq 2$, $\cX$ is a Deligne-Mumford stack with coarse moduli space $B$.
\begin{proposition}\label{prop:computation}
Under the above notations, and assumptions, the following holds:
\begin{itemize}
\item[(i)] When $d$ is even, we have $U(Q)_{1/2} \simeq U(\widetilde{B})_{1/2} \oplus U(B)_{1/2}^{\oplus (d-2)}$.
\item[(ii)] When $d$ is odd and $\mathrm{char}(k)\neq 2$, $U(Q)_{1/2}$ belongs to the smallest thick triangulated subcategory of $\NMot(k)_{1/2}$ containing the noncommutative mixed motives $\{U(V_i)_{1/2}\}$ and $\{U(\widetilde{D}_i)_{1/2}\}$, where $V_i$ is any affine open subscheme of $B$ and $\widetilde{D}_i$ is any Galois $2$-fold cover of $D_i:=D\cap V_i$.
\end{itemize}
\end{proposition}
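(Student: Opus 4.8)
The plan is to feed Theorem~\ref{thm:aux}(ii) into Kuznetsov's semi-orthogonal decomposition of a quadric fibration, exploiting along the way that noncommutative mixed motives do not distinguish a sheaf of Azumaya algebras from its base once the rank has been inverted. First I would recall from \cite[\S4]{Quadrics} (see also \cite[\S1.4]{ABB}) the semi-orthogonal decomposition
\[
\perf(Q) = \langle \perf(B;\cC l_0(q)),\, \perf(B)_1,\, \ldots,\, \perf(B)_{d-2}\rangle\,,
\]
where $\cC l_0(q)$ is the sheaf of even parts of the Clifford algebra of $q$ and each component $\perf(B)_j$ is equivalent, via a Fourier--Mukai type functor, to $\perf(B)$. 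Since $U\colon \dgcat(k)\to \NMot(k)$ sends semi-orthogonal decompositions to direct sums (as already used in the proof of Theorem~\ref{thm:aux}(i)), this gives $U(Q)\simeq U(B;\cC l_0(q))\oplus U(B)^{\oplus(d-2)}$, hence the same after $\bbZ[1/2]$-linearization; it then remains to identify $U(B;\cC l_0(q))_{1/2}$ according to the parity of $d$.

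When $d$ is even, I would invoke the fact --- recalled in \cite[\S3.5]{Quadrics} and \cite[\S1.6]{ABB}, and valid precisely because the fibers of $q$ have corank $\leq 1$ and $D$ is $k$-smooth --- that $\cC l_0(q)$ is a sheaf of Azumaya algebras over the ($k$-smooth) discriminant cover $\widetilde{B}=\mathrm{Spec}_B(Z(\cC l_0(q)))$. Hence $\perf_\dg(B;\cC l_0(q))=\perf_\dg(\widetilde{B};\cC l_0(q))$, and since inverting $2$ inverts the ranks of $\cC l_0(q)$ over $\widetilde{B}$, the Azumaya invariance of noncommutative motives (Tabuada--Van den Bergh; see also \cite{book}) yields $U(\widetilde{B};\cC l_0(q))_{1/2}\simeq U(\widetilde{B})_{1/2}$. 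Substituting into the decomposition above gives item~(i).

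When $d$ is odd and $\mathrm{char}(k)\neq 2$, I would use instead the root-stack description recalled in \cite[\S3.6]{Quadrics} and \cite[\S1.7]{ABB}: over the discriminant stack $\cX = \sqrt[2]{(\mathrm{det}(E^\vee)^{\otimes 2}\otimes(\cL^\vee)^{\otimes d},\,\mathrm{disc}(q))/B}$, the even Clifford algebra lifts to a sheaf of Azumaya algebras $\cF$ together with an equivalence $\perf_\dg(B;\cC l_0(q))\simeq \perf_\dg(\cX;\cF)$, the product $s$ of the ranks of $\cF$ being again a power of $2$. Thus $U(B;\cC l_0(q))\simeq U(\cX;\cF)$. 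Now $\cX$ is the root stack $\sqrt[2]{(\cdot)/B}$ attached to the $k$-smooth scheme $B$ and to $\mathrm{disc}(q)$, whose zero locus $D$ is $k$-smooth, $\mathrm{char}(k)\neq 2=r$, and $k$ trivially contains the square roots of unity; so Theorem~\ref{thm:aux}(ii) applies with $r=2$, and since $rs$ is a power of $2$ we have $\NMot(k)_{1/rs}=\NMot(k)_{1/2}$. It places $U(\cX;\cF)_{1/2}$ in the smallest thick triangulated subcategory generated by the $U(V_i)_{1/2}$ and by the motives $U(\widetilde{D}_i)_{1/2}$ of the Galois $2$-fold covers of $D_i=D\cap V_i$. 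Combining this with $U(Q)_{1/2}\simeq U(B;\cC l_0(q))_{1/2}\oplus U(B)_{1/2}^{\oplus(d-2)}$, and noting that $U(B)_{1/2}$ is itself built from the $U(V_i)_{1/2}$ by Mayer--Vietoris over a finite affine cover (cf.\ Lemma~\ref{lem:key} with $r=1$ and $\cF=\cO$), gives item~(ii).

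I expect the main obstacle to lie not in the assembly above but in pinning down, in exactly the generality needed, the two structural inputs on $\cC l_0(q)$: that it is a sheaf of Azumaya algebras over the discriminant cover $\widetilde{B}$ when $d$ is even, and that it lifts to an Azumaya algebra on the discriminant stack $\cX$ realizing $\perf_\dg(B;\cC l_0(q))$ when $d$ is odd. It is exactly here that the corank $\leq 1$ hypothesis and the $k$-smoothness of $D$ (hence of $\widetilde{B}$, resp.\ the regularity of $\cX$) are essential; one must also check that all ranks in play are $2$-powers, so that the various coefficient rings collapse to $\bbZ[1/2]$. Granting these inputs, Proposition~\ref{prop:computation} follows by concatenating Kuznetsov's semi-orthogonal decomposition, the Azumaya invariance of noncommutative motives, and Theorem~\ref{thm:aux}(ii).
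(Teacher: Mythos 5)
Your proposal is correct and follows essentially the same route as the paper: Kuznetsov's semi-orthogonal decomposition giving $U(Q)\simeq U(B;\cC l_0(q))\oplus U(B)^{\oplus(d-2)}$, then Azumaya invariance over the discriminant cover $\widetilde{B}$ for $d$ even, and the root-stack description plus Theorem~\ref{thm:aux}(ii) together with a Mayer--Vietoris induction to absorb $U(B)_{1/2}$ into the $U(V_i)_{1/2}$ for $d$ odd. The only differences are cosmetic (explicitly verifying the hypotheses of Theorem~\ref{thm:aux}(ii), such as $k$ containing the square roots of unity, which the paper leaves implicit).
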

\begin{proof}
As proved in \cite[Thm.~4.2]{Quadrics} (see also \cite[Thm.~2.2.1]{ABB}), we have the semi-orthogonal decomposition $\perf(Q) = \langle \perf(B; \cC l_0(q)), \perf(B)_1, \ldots, \perf(B)_{d-2}\rangle$, where $\perf(B)_j:=q^\ast(\perf(B)) \otimes \cO_{Q/B}(j)$. All the categories $\perf(B)_j$ are equivalent (via a Fourier-Mukai type functor) to $\perf(B)$. Therefore, since the functor $U\colon \dgcat(k) \to \NMot(k)$ sends semi-orthogonal decompositions to direct sums, we obtain the direct sum decomposition $U(Q) \simeq U(B;\cC l_0(q)))\oplus U(B)^{\oplus (d-2)}$. 

We start by proving item (i). As explained in \cite[\S3.5]{Quadrics} (see also \cite[\S1.6]{ABB}), when $d$ is even, the category $\perf(B; \cC l_0(q))$ is equivalent (via a Fourier-Mukai type functor) to $\perf(\widetilde{B}; \cF)$ where $\cF$ is a certain sheaf of Azumaya algebras on $\widetilde{B}$ of rank $2^{\frac{d}{2}-1}$. This leads to an isomorphism $U(B; \cC l_0(q))\simeq U(\widetilde{B}; \cF)$. Making use of \cite[Thm.~2.1]{Azumaya}, we hence conclude that $U(B; \cC l_0(q))_{1/2}$ is isomorphic to $U(\widetilde{B}; \cF)_{1/2}\simeq U(\widetilde{B})_{1/2}$. Consequently, we obtain the isomorphism of item (i).

Let us now prove item (ii). As explained in \cite[\S3.6]{Quadrics} (see also \cite[\S1.7]{ABB}), when $d$ is odd, the category $\perf(B; \cC l_0(q))$ is equivalent (via a Fourier-Mukai type functor) to $\perf(\cX; \cF)$ where $\cF$ is a certain sheaf of Azumaya algebras on $\cX$ of rank $2^{\frac{d-1}{2}}$. This leads to an isomorphism $U(B; \cC l_0(q))\simeq U(\cX; \cF)$. By combining Theorem \ref{thm:aux}(ii) with the isomorphism $U(Q) \simeq U(\cX;\cF) \oplus U(B)^{\oplus (d-2)}$, we hence conclude that $U(Q)_{1/2}$ belongs to the smallest thick triangulated subcategory of $\NMot(k)_{1/2}$ containing $U(B)_{1/2}$, $\{ U(V_i)_{1/2}\}$, and $\{U(\widetilde{D}_i)_{1/2}\}$, where $V_i$ is any affine open subscheme of $B$ and $\widetilde{D}_i$ is any Galois $2$-fold cover of $D_i$. We now claim that $U(B)_{1/2}$ belongs to the smallest thick triangulated subcategory of $\NMot(k)_{1/2}$ containing $\{U(V_i)_{1/2}\}$; note that this would conclude the proof. Choose an affine open cover $\{W_i\}$ of $B$. Since $B$ is quasi-compact (recall that $B$ is of finite type over $k$), this affine open cover admits a {\em finite} subcover. Therefore, similarly to the proof of Theorem \ref{thm:aux}, our claim follows from an inductive argument using the $\bbZ[1/2]$-linearization of the Mayer-Vietoris distinguished triangles $ U(B) \to U(W_1) \oplus U(W_2) \stackrel{\pm}{\to} U(W_{12}) \stackrel{\partial}{\to} \Sigma U(B)$. 
\end{proof}
As proved in \cite[Thm.~2.8]{Bridge}, there exists a $\bbQ$-linear, fully-faithful, $\otimes$-functor $\Phi$ making the following diagram commute
\begin{equation}\label{eq:diagram-big}
\xymatrix{
\mathrm{Sm}(k) \ar[rrr]^-{X\mapsto \perf_\dg(X)} \ar[d]_-{M(-)_\bbQ} &&& \dgcat(k) \ar[d]^-{U(-)_\bbQ} \\
\mathrm{DM}_{\mathrm{gm}}(k)_\bbQ \ar[d]_-\pi &&& \mathrm{NMot}(k)_\bbQ \ar[d]^-{\underline{\mathrm{Hom}}(-,U(k)_\bbQ)}\\
\mathrm{DM}_{\mathrm{gm}}(k)_\bbQ/_{\!-\otimes \bbQ(1)[2]} \ar[rrr]_-{\Phi} &&& \mathrm{NMot}(k)_\bbQ\,,
}
\end{equation}
where $\mathrm{DM}_{\mathrm{gm}}(k)_\bbQ/_{\!-\otimes \bbQ(1)[2]}$ stands for the orbit category with respect to the Tate motive $\bbQ(1)[2]$ and $\uHom(-,-)$ for the internal Hom of the monoidal structure; note that the functors $X \mapsto \perf_\dg(X)$ and $\uHom(-,U(k)_\bbQ)$ are contravariant. By construction, $\pi$ is a faithful $\otimes$-functor. Therefore, it follows from \cite[Lem.~1.11]{Mazza} that we have the following equivalence:
\begin{equation}\label{eq:equivalence}
\mathrm{S}(X) \Leftrightarrow \text{the}\,\,\text{noncommutative}\,\,\text{mixed}\,\,\text{motive}\,\,(\Phi \circ \pi) (M(X)_\bbQ)\,\,\text{is}\,\,\text{Schur}\text{-}\text{finite}.
\end{equation}
We now have all the ingredients necessary to conclude the proof of Theorem \ref{thm:main}.
\subsection*{Item (i)}
The above functors $\pi$ and $\uHom(-,U(k)_\bbQ)$ are $\bbQ$-linear. Therefore, by combining Proposition \ref{prop:computation}(i) with the commutative diagram \eqref{eq:diagram-big}, we conclude that
\begin{equation}\label{eq:iso}
(\Phi\circ \pi)(M(Q)_\bbQ)\simeq (\Phi\circ \pi)(M(\widetilde{B})_\bbQ) \oplus (\Phi\circ \pi)(M(B)_\bbQ)^{\oplus (d-2)}\,.
\end{equation}
Since Schur-finiteness is stable under direct sums and direct summands, the proof of the equivalence $\mathrm{S}(Q) \Leftrightarrow \mathrm{S}(B) + \mathrm{S}(\widetilde{B})$ follows then from \eqref{eq:equivalence}-\eqref{eq:iso}.
\subsection*{Item (ii)} 
Recall from \cite[\S8.5.1-8.5.2]{book} that, by construction, $\NMot(k)_\bbQ$ is a $\bbQ$-linear closed symmetric monoidal triangulated category in the sense of Hovey \cite[\S6-7]{Hovey}. As proved in \cite[Thm.~1]{Guletskii}, this implies that Schur-finiteness has the 2-out-of-3 property with respect to distinguished triangles. The functor $\uHom(-,U(k)_\bbQ)$ is triangulated. Hence, by combining Proposition \ref{prop:computation}(ii) with the commutative diagram \eqref{eq:diagram-big}, we conclude that $(\Phi\circ \pi)(M(Q)_\bbQ)$ belongs to the smallest thick triangulated subcategory of $\NMot(k)_\bbQ$ containing the noncommutative mixed motives $\{(\Phi\circ \pi)(M(V_i)_\bbQ)\}$ and $\{(\Phi \circ \pi)(M(\widetilde{D}_i)_\bbQ)\}$, where $V_i$ is any affine open subscheme of $B$ and $\widetilde{D}_i$ is any Galois $2$-fold cover of $D_i$. Since by assumption the conjectures $\{\mathrm{S}(V_i)\}$ and $\{\mathrm{S}(\widetilde{D}_i)\}$ hold, \eqref{eq:equivalence} implies that the noncommutative mixed motives $\{(\Phi\circ \pi)(M(V_i)_\bbQ)\}$ and $\{(\Phi \circ \pi)(M(\widetilde{D}_i)_\bbQ)\}$ are Schur-finite. Therefore, making use of the 2-out-of-3 property of Schur-finiteness with respect to distinguished triangles (and of the stability of Schur-finiteness under direct summands), we conclude that $(\Phi\circ \pi)(M(Q)_\bbQ)$ is also Schur-finite. The proof follows now from the above equivalence \eqref{eq:equivalence}.
\section{Proof of Theorem \ref{thm:intersection}}
Recall from the proof of Proposition \ref{prop:computation} that we have the semi-orthogonal decomposition $\perf(Q)=\langle \perf(\bbP^{m-1};\cC l_0(q)), \perf(\bbP^{m-1})_1, \ldots, \perf(\bbP^{m-1})_{d-2} \rangle$, and consequently the following direct sum decompositon:
\begin{equation}\label{eq:direct}
U(Q)\simeq U(\bbP^{m-1}; \cC l_0(q)) \oplus U(\bbP^{m-1})^{\oplus (d-2)}\,.
\end{equation}
As proved in \cite[Thm.~5.5]{Quadrics} (see also \cite[Thm.~2.3.7]{ABB}), the following also holds:
\begin{itemize}
\item[(a)] When $2m<d$, we have $\perf(Y)=\langle \perf(\bbP^{m-1}; \cC l_0(q)), \cO(1), \ldots, \cO(d-2m)\rangle$. Consequently, since the functor $U\colon \dgcat(k) \to \NMot(k)$ sends semi-orthogonal decompositions to direct sums, we obtain the following direct sum decomposition $U(Y) \simeq U(\bbP^{m-1}; \cC l_0(q)) \oplus U(k)^{\oplus (d-2m)}$.
\item[(b)] When $2m=d$, the category $\perf(Y)$ is equivalence (via a Fourier-Mukai type functor) to $\perf(\bbP^{m-1}; \cC l_0(q))$. Consequently, we obtain an isomorphism of noncommutative mixed motives $U(Y) \simeq U(\bbP^{m-1}; \cC l_0(q))$.
\item[(c)] When $2m >d$, $\perf(Y)$ is an admissible subcategory of $\perf(\bbP^{m-1}; \cC l_0(q))$. This implies that $U(Y)$ is a direct summand of $U(\bbP^{m-1}; \cC l_0(q))$.
\end{itemize}
Let us now prove the implication $\mathrm{S}(Q) \Rightarrow \mathrm{S}(Y)$. If the conjecture $\mathrm{S}(Q)$ holds, then it follows from the decomposition \eqref{eq:direct}, from the commutative diagram \eqref{eq:diagram-big}, from the equivalence \eqref{eq:equivalence}, and from the stability of Schur-finiteness under direct summands, that the noncommutative mixed motive $\uHom(U(\bbP^{m-1};\cC l_0(q))_\bbQ, U(k)_\bbQ)$ is Schur-finite. Making use of the above descriptions (a)-(c) of $U(Y)$ and of the commutative diagram \eqref{eq:diagram-big}, we hence conclude that the noncommutative mixed motive $(\Phi \circ \pi)(M(Y)_\bbQ)$ is also Schur-finite. Consequently, the conjecture $\mathrm{S}(Y)$ follows now from the above equivalence \eqref{eq:equivalence}. Finally, note that when $2m\leq d$, a similar argument proves the converse implication $\mathrm{S}(Y) \Rightarrow \mathrm{S}(Q)$.
\section{Proof of Theorem \ref{thm:Pezzo}}
Recall first from \cite[Prop.~5.12]{Pezzo} that since $\mathrm{char}(k)\not\in \{2,3\}$ and $T$ is $k$-smooth, the $k$-schemes $B, Z_2$ and $Z_3$ are also $k$-smooth.
\begin{proposition}\label{prop:Pezzo}
We have $U(T)_{1/6} \simeq U(B)_{1/6} \oplus U(Z_2)_{1/6}\oplus U(Z_3)_{1/6}$.
\end{proposition}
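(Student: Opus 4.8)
The plan is to exhibit a semi-orthogonal decomposition of $\perf(T)$ whose components are (twisted) derived categories of $B$, $Z_2$, and $Z_3$, and then pass to noncommutative mixed motives inverting $6$. Kuznetsov's work on sextic du Val del Pezzo surfaces \cite{Pezzo} provides exactly such a decomposition relatively over $B$: for a single sextic du Val del Pezzo surface $X$ there is a semi-orthogonal decomposition of $\perf(X)$ into an exceptional object together with derived categories of the length-$3$ and length-$2$ moduli schemes (equivalently, $k$-algebras of rank $3$ and $2$), and this globalizes over the base $B$ to a decomposition of $\perf(T)$ whose pieces are $\perf(B)$, $\perf(Z_2;\cB_2)$, and $\perf(Z_3;\cB_3)$ for suitable sheaves of Azumaya algebras $\cB_2$ on $Z_2$ and $\cB_3$ on $Z_3$. (Here one uses $\mathrm{char}(k)\notin\{2,3\}$ and the $k$-smoothness of $T$, which by \cite[Prop.~5.12]{Pezzo} also guarantees $B$, $Z_2$, $Z_3$ are $k$-smooth, so that all the relevant dg categories are smooth and proper.)

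First I would recall/cite the precise relative semi-orthogonal decomposition of $\perf_\dg(T)$ from \cite[\S5]{Pezzo}, with its three components identified with $\perf_\dg(B)$, $\perf_\dg(Z_2;\cB_2)$, and $\perf_\dg(Z_3;\cB_3)$ up to Fourier--Mukai type equivalences. Then, since the functor $U\colon \dgcat(k)\to \NMot(k)$ sends semi-orthogonal decompositions to direct sums, we obtain
\begin{equation*}
U(T)\simeq U(B)\oplus U(Z_2;\cB_2)\oplus U(Z_3;\cB_3)\,.
\end{equation*}
Next I would strip off the Azumaya twists after inverting $6$. The sheaf $\cB_2$ has rank a power of the relevant degree (dividing a power of $3$) and $\cB_3$ has rank dividing a power of $2$; more precisely the ranks involved are supported at the primes $2$ and $3$. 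Invoking \cite[Thm.~2.1]{Azumaya} (the statement that $U(Z;\cB)_{1/\mathrm{rk}}\simeq U(Z)_{1/\mathrm{rk}}$ for a sheaf of Azumaya algebras $\cB$), and noting $\bbZ[1/6]$ already inverts the ranks of $\cB_2$ and $\cB_3$, we get $U(Z_2;\cB_2)_{1/6}\simeq U(Z_2)_{1/6}$ and $U(Z_3;\cB_3)_{1/6}\simeq U(Z_3)_{1/6}$. Applying the exact localization $(-)_{1/6}$ to the displayed isomorphism then yields
\begin{equation*}
U(T)_{1/6}\simeq U(B)_{1/6}\oplus U(Z_2)_{1/6}\oplus U(Z_3)_{1/6}\,,
\end{equation*}
which is the claim.

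The main obstacle is purely bookkeeping from the geometry side: pinning down the relative semi-orthogonal decomposition of $\perf(T)$ over a general base $B$ (rather than over a point), together with the precise identification of the components with twisted derived categories of $Z_2$ and $Z_3$ and the exact ranks of the Azumaya algebras $\cB_2, \cB_3$. Kuznetsov establishes the pointwise/family statement in \cite[\S5]{Pezzo}; the one subtlety to check is that the construction of $\cM_d$ and $Z_d$ is compatible with base change and that the resulting Fourier--Mukai kernels are perfect, so that the decomposition is a genuine semi-orthogonal decomposition of dg categories over $B$ — but this is exactly what \cite{Pezzo} provides under the running hypotheses. Once that input is in hand, the motivic part (additivity of $U$ over semi-orthogonal decompositions and the Azumaya invariance \cite[Thm.~2.1]{Azumaya}) is formal, entirely parallel to the proof of Proposition \ref{prop:computation}(i).
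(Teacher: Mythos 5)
Your proof is correct and follows essentially the same route as the paper: Kuznetsov's semi-orthogonal decomposition $\perf(T)=\langle \perf(B),\perf(Z_2;\cF_2),\perf(Z_3;\cF_3)\rangle$ from \cite[Thm.~5.2 and Prop.~5.10]{Pezzo}, additivity of $U$ on semi-orthogonal decompositions, removal of the Azumaya twists via \cite[Thm.~2.1]{Azumaya}, and $\bbZ[1/6]$-linearization. The only slip is that you swapped the two twists (the Azumaya algebra on $Z_2$ has order $2$, hence rank a power of $2$, while the one on $Z_3$ has order $3$), but since both primes are inverted in $\bbZ[1/6]$ this does not affect the argument.
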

\begin{proof}
As proved in \cite[Thm.~5.2 and Prop.~5.10]{Pezzo}, we have the semi-orthogonal decomposition $\perf(T) = \langle \perf(B), \perf(Z_2;\cF_2), \perf(Z_3; \cF_3)\rangle$, where $\cF_2$ (resp. $\cF_3$) is a certain sheaf of Azumaya algebras over $Z_2$ (resp. $Z_3$) of order $2$ (resp. $3$). Recall that the functor $U\colon \dgcat(k) \to \NMot(k)$ sends semi-orthogonal decompositions to direct sums. Therefore, we obtain the following direct sum decomposition:
\begin{equation}\label{eq:decomp-last}
U(T) \simeq U(B) \oplus U(Z_2; \cF_2) \oplus U(Z_3; \cF_3)\,.
\end{equation}
Since $\cF_2$ (resp. $\cF_3$) is of order $2$ (resp. $3$), the rank of $\cF_2$ (resp. $\cF_3$) is necessarily a power of $2$ (resp. $3$). Making use of \cite[Thm.~2.1]{Azumaya}, we hence conclude that the noncommutative mixed motive $U(Z_2;\cF_2)_{1/2}$ (resp. $U(Z_3; \cF_3)_{1/3}$) is isomorphic to $U(Z_2)_{1/2}$ (resp. $U(Z_3)_{1/3}$). Consequently, the proof follows now from the $\bbZ[1/6]$-linearization of \eqref{eq:decomp-last}.
\end{proof}
The functors $\pi$ and $\uHom(-,U(k)_\bbQ)$ in \eqref{eq:diagram-big} are $\bbQ$-linear. Therefore, similarly to the proof of item (i) of Theorem \ref{thm:main}, by combining Proposition \ref{prop:Pezzo} with the commutative diagram \eqref{eq:diagram-big}, we conclude that
\begin{equation}\label{eq:last}
(\Phi\circ \pi)(M(T)_\bbQ)\simeq (\Phi\circ \pi)(M(\widetilde{B})_\bbQ) \oplus (\Phi\circ \pi)(M(Z_2)_\bbQ)\oplus (\Phi\circ \pi)(M(Z_3)_\bbQ) \,.
\end{equation}
Since Schur-finiteness is stable under direct sums and direct summands, the proof follows then from the combination of \eqref{eq:last} with the above equivalence \eqref{eq:equivalence}.
\section{Proof of Theorem \ref{thm:Bass}}
\subsection*{Item (i)} We start by proving the first claim. As explained in \cite[\S8.6]{book} (see also \cite[Thm.~15.10]{Duke}), given $X \in \mathrm{Sm}(k)$, we have the isomorphisms of abelian groups:
\begin{eqnarray}\label{eq:iso-key}
\Hom_{\NMot(k)}(U(k), \Sigma^{-n} U(X))\simeq K_n(X) && n \in \bbZ\,.
\end{eqnarray}
Assume that $d$ is even. By combining Proposition \ref{prop:computation}(i) with the $\bbZ[1/2]$-linearization of  \eqref{eq:iso-key}, we conclude that $K_n(Q)_{1/2}\simeq K_n(\widetilde{B})_{1/2} \oplus K_n(B)_{1/2}^{\oplus (d-2)}$. Therefore, since finite generation is stable under direct sums and direct summands, we obtain the equivalence $\mathrm{B}(Q)_{1/2} \Leftrightarrow \mathrm{B}(B)_{1/2} + \mathrm{B}(\widetilde{B})_{1/2}$. Assume now that $d$ is odd and that $\mathrm{char}(k)\neq 2$. Finite generation has the 2-out-of-3 property with respect to (short or long) exact sequences and is stable under direct summands. Therefore, the proof of the implication $\{\mathrm{B}(V_i)_{1/2}\} + \{\mathrm{B}(\widetilde{D}_i)_{1/2}\} \Rightarrow \mathrm{B}(Q)_{1/2}$ follows from the combination of Proposition \ref{prop:computation}(ii) with the $\bbZ[1/2]$-linearization of \eqref{eq:iso-key}. Finally, recall from \cite{Quillen, Quillen2, Quillen1} that the conjecture $\mathrm{B}(X)$ holds in the case where $\mathrm{dim}(X)\leq 1$. Therefore, the Corollaries \ref{cor:main}-\ref{cor:main2} also hold similarly for the conjecture $\mathrm{B}(-)_{1/2}$. 

We now prove the second claim. Let $q\colon Q \to B$ be a quadric fibration as in Theorem \ref{thm:main} with $B$ a curve. Thanks to Corollary \ref{cor:main} (for the conjecture $\mathrm{B}(-)_{1/2}$), it suffices to show that the groups $K_n(Q), n \geq 2$, are torsion. Assume first that $d$ is even. By combining Proposition \ref{prop:computation}(i) with the $\bbQ$-linearization of \eqref{eq:iso-key}, we obtain an isomorphism $K_n(Q)_\bbQ \simeq K_n(\widetilde{B})_\bbQ \oplus K_n(B)_\bbQ^{\oplus (d-2)}$. Thanks to Proposition \ref{prop:curve} below, we have $K_n(\widetilde{B})_\bbQ=K_n(B)_\bbQ=0$ for every $n \geq 2$. Therefore, we conclude that the groups $K_n(Q), n \geq 2$, are torsion. Assume now that $d$ is even and that $\mathrm{char}(k)\neq 2$. Thanks to Proposition \ref{prop:computation}(ii), $U(Q)_\bbQ$ belongs to the smallest thick triangulated subcategory of $\NMot(k)_\bbQ$ containing the noncommutative mixed motives $\{U(V_i)_\bbQ\}$ and $\{U(\widetilde{D}_i)_\bbQ\}$, where $V_i$ is any affine open subscheme of $B$ and $\widetilde{D}_i$ is any Galois $2$-fold cover of $D_i$. Moreover, $U(Q)_\bbQ$ may be explicitly obtained from $\{U(V_i)_\bbQ\}$ and $\{U(\widetilde{D}_i)_\bbQ\}$ using solely the $\bbQ$-linearization of the Mayer-Vietoris distinguished triangles. Therefore, since $K_n(V_i)_\bbQ=0$ for every $n \geq 2$ (see Proposition \ref{prop:curve} below) and $K_n(\widetilde{D}_i)_\bbQ=0$ for every $n \geq 1$ (see Quillen's computation \cite{Quillen1} of the algebraic $K$-theory of a finite field), an inductive argument using the $\bbQ$-linearization of \eqref{eq:iso-key} and the  $\bbQ$-linearization of the Mayer-Vietoris distinguished triangles implies that the groups $K_n(Q), n \geq 2$, are torsion.
\begin{proposition}\label{prop:curve}
We have $K_n(X)_\bbQ=0$ for every $n \geq 2$ and smooth $k$-curve $X$.
\end{proposition}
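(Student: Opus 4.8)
The plan is to reduce the statement to the case of a smooth \emph{projective} $k$-curve, where the required vanishing is a known instance of Parshin's conjecture, using the localization sequence in algebraic $K$-theory together with Quillen's computation of the $K$-theory of finite fields as the glue.

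First I would clear away the formal reductions. Since algebraic $K$-theory takes finite disjoint unions to finite products, and a smooth $k$-curve of finite type has only finitely many connected components --- each of them again a smooth \emph{integral} $k$-curve --- it suffices to treat a connected $X$. Such an $X$ embeds as a dense open subscheme of the smooth projective model $\overline{X}$ of its function field $k(X)$, and the complement $Z:=\overline{X}\setminus X$ is a finite set of closed points $x_1,\dots,x_r$ whose residue fields $\kappa(x_i)$ are finite fields.

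Next I would run the localization sequence in algebraic $K$-theory \cite{Quillen} for the open immersion $X\hookrightarrow\overline{X}$; here dévissage on the regular $0$-dimensional scheme $Z$ identifies its $G$-theory with $\bigoplus_i K_\ast(\kappa(x_i))$, and regularity of $\overline{X}$ identifies its $K$-theory with its $G$-theory. This yields a long exact sequence
\[
\cdots \too \bigoplus_{i=1}^r K_n(\kappa(x_i)) \too K_n(\overline{X}) \too K_n(X) \too \bigoplus_{i=1}^r K_{n-1}(\kappa(x_i)) \too \cdots\,.
\]
By Quillen's computation \cite{Quillen1} of the algebraic $K$-theory of a finite field, the groups $K_m(\kappa(x_i))$ are finite for every $m\geq 1$; hence, after $-\otimes_\bbZ\bbQ$, both outer terms vanish whenever $n\geq 2$, and exactness produces an isomorphism $K_n(X)_\bbQ\simeq K_n(\overline{X})_\bbQ$ for every $n\geq 2$. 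It thus remains to prove that $K_n(\overline{X})_\bbQ=0$ for every smooth projective $k$-curve $\overline{X}$ and every $n\geq 2$.

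This last assertion is the genuine input, and I would not try to reprove it: it is the case $\mathrm{dim}\leq 1$ of Parshin's conjecture over a finite field, a classical theorem. One way to obtain it is through Harder's computation of the rational cohomology of $S$-arithmetic subgroups of $\mathrm{GL}_N$ over the global function field $k(\overline{X})$; another is to combine the finite generation of the groups $K_n(\overline{X})$ with the vanishing of their ranks, the latter being forced by the $\ell$-adic realization and the Weil conjectures, since in the relevant range the Frobenius eigenvalues on $H^\ast(\overline{X}\times_k\overline{k},\bbQ_\ell(i))$ have absolute value different from $1$. Either way the proposition follows; the only step carrying real content is this projective curve case, everything else being bookkeeping with the localization sequence.
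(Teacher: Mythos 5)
Your argument is correct, but it reduces in the opposite direction from the paper. The paper takes the \emph{affine} case as the input --- $K_n(\cO(X))_\bbQ=0$ for $n\geq 2$ by Harder \cite[Cor.~3.2.3]{Harder} (see also \cite[Thm.~0.5]{Quillen}) --- and then globalizes by choosing a finite affine open cover and running an induction on the $\bbQ$-linearized Mayer--Vietoris triangles in $\NMot(k)$, read off through the co-representability isomorphism \eqref{eq:iso-key}; this keeps the proof inside the noncommutative-motive formalism already set up for Theorem \ref{thm:aux}(ii) and, incidentally, works for any quasi-compact smooth curve. You instead reduce to the \emph{projective} case: Quillen's localization/d\'evissage for $X\hookrightarrow\overline{X}$ plus the finiteness of $K_m(\kappa(x_i))$ for $m\geq 1$ \cite{Quillen1} gives $K_n(X)_\bbQ\simeq K_n(\overline{X})_\bbQ$ for $n\geq 2$, and the projective case is Parshin's conjecture in dimension one, again ultimately Harder's theorem (indeed it follows from the affine case by the same localization sequence run the other way). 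So the two proofs rest on the same input and differ only in the descent mechanism: yours is purely classical $K$-theory with no motivic machinery, at the small cost of assuming $X$ separated so that the smooth compactification exists --- harmless here, since the curves to which the proposition is applied are quasi-projective. One caveat: your second suggested route to the projective case (finite generation plus vanishing of ranks ``forced by the $\ell$-adic realization and the Weil conjectures'') is not a complete argument as stated, since bounding the rank of $K_n(\overline{X})$ by \'etale cohomology needs a nontrivial comparison; keep Harder as the citation, as you do in your primary route and as the paper does.
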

\begin{proof}
In the particular case where $X$ is affine, this result was proved in \cite[Cor.~3.2.3]{Harder} (see also \cite[Thm.~0.5]{Quillen}). In the general case, choose an affine open cover $\{W_i\}$ of $X$. Since $X$ is quasi-compact, this affine open cover admits a {\em finite} subcover. Therefore, the proof follows from an inductive argument (similar to the one in the proof of Theorem \ref{thm:aux}(ii)) using the $\bbQ$-linearization of \eqref{eq:iso-key} and the $\bbQ$-linearization of the Mayer-Vietoris distinguished triangles.
\end{proof}
\subsection*{Item (ii)} If the conjecture $\mathrm{B}(Q)$ holds, then it follows from the decomposition \eqref{eq:direct} and from the isomorphisms \eqref{eq:iso-key} that the algebraic $K$-theory groups $K_n(\perf_\dg(\bbP^{m-1}; \cC l_0(q))), n \geq 0$, are finitely generated. Therefore, by combining the descriptions (a)-(c) of the noncommutative mixed motive $U(Y)$ (see the proof of Theorem \ref{thm:intersection}) with \eqref{eq:iso-key}, we conclude that the conjecture $\mathrm{B}(Y)$ also holds. Note that when $2m\leq d$, a similar argument proves the converse implication $\mathrm{B}(Y) \Rightarrow \mathrm{B}(Q)$.

\subsection*{Item (iii)} Items (i)-(ii) of Theorem \ref{thm:Bass} imply that Corollary \ref{cor:intersection} holds similarly for the conjecture $\mathrm{B}(-)_{1/2}$. We now address the second claim. Let $q\colon Q \to \bbP^1$ be the quadric fibration associated to the smooth complete intersection $Y$ of two quadric hypersurfaces. Thanks to item (i), the groups $K_n(Q)_{1/2}, n \geq 2$, are finite. Therefore, making use of the decomposition \eqref{eq:direct}, of the $\bbZ[1/2]$-linearization of \eqref{eq:iso-key}, and of the above descriptions (a)-(c) of $U(Y)$ (see the proof of Theorem \ref{thm:intersection}), we conclude that the groups $K_n(Y)_{1/2}, n \geq 2$, are also finite.

\subsection*{Item (iv)}
We start by proving the first claim. By combining Proposition \ref{prop:Pezzo} with the $\bbZ[1/6]$-linearization of \eqref{eq:iso-key}, we conclude that 
$$K_n(T)_{1/6}\simeq K_n(B)_{1/6} \oplus K_n(Z_2)_{1/6} \oplus K_n(Z_3)_{1/6}\,.$$ 
Therefore, since finite generation is stable under sums and direct summands, we obtain the equivalence $\mathrm{B}(T)_{1/6} \Leftrightarrow \mathrm{B}(B)_{1/6} + \mathrm{B}(Z_2)_{1/6} + \mathrm{B}(Z_3)_{1/6}$. As mentioned in the proof of item (i), the conjecture $\mathrm{B}(X)$ holds in the case where $\mathrm{dim}(X)\leq 1$. Hence, Corollary \ref{cor:Pezzo} also holds similarly for the conjecture $\mathrm{B}(-)_{1/6}$.

We now prove the second claim. Let $f\colon T \to B$ be a family of sextic du Val del Pezzo surfaces as in Theorem \ref{thm:Pezzo} with $B$ a curve. Similarly to the proof of item (i) of Theorem \ref{thm:Bass}, it suffices to show that the groups $K_n(T), n \geq 2$, are torsion. By combining Proposition \ref{prop:Pezzo} with the $\bbQ$-linearization of \eqref{eq:iso-key}, we obtain an isomorphism $K_n(T)_\bbQ \simeq K_n(B)_\bbQ \oplus K_n(Z_2)_\bbQ \oplus K_n(Z_3)_\bbQ$. Thanks to Proposition \ref{prop:curve}, we have moreover $K_n(B)_\bbQ = K_n(Z_2)_\bbQ = K_n(Z_3)_\bbQ =0$ for every $n \geq 2$. Therefore, we conclude that the groups $K_n(T), n \geq 2$, are torsion.

\subsection*{Acknowledgments:} The author is grateful to Joseph Ayoub for useful e-mail exchanges concerning the Schur-finiteness conjecture. The author also would like to thank the Hausdorff Research Institute for Mathematics for its hospitality.

\end{document}

\end{proof}